 \newlength{\baseunit}               
\newcommand{\epf}{\qed \vspace{+10pt}}
\numberwithin{equation}{section}
\newenvironment{enumeratea} {\begin{enumerate}[\upshape (a)]} {\end{enumerate}}
\newenvironment{enumeratei} {\begin{enumerate}[\upshape (i)]} {\end{enumerate}}
\newtheorem{lem}{Lemma}[section]
\newtheorem{thm}[lem]{Theorem}
\newtheorem{prop}[lem]{Proposition}
\newtheorem{cor}[lem]{Corollary}
\theoremstyle{definition}
\newtheorem{defn}[lem]{Definition}
\newtheorem{example}[lem]{Example}
\theoremstyle{remark}
\newtheorem{remark}[lem]{Remark}
   \newcommand\cF{\mathcal{F}} \newcommand\cG{\mathcal{G}}\newcommand\cU{\mathcal{U}}\newcommand\cV{\mathcal{V}}\newcommand\cW{\mathcal{W}}\newcommand\cX{\mathcal{X}}\newcommand\cY{\mathcal{Y}}\newcommand\cZ{\mathcal{Z}}
\newcommand{\PP}{\mathbb{P}}
\newcommand{\GG}{\mathbb{G}}
\newcommand{\oh}{\mathcal{O}}
\newcommand{\cx}{\mathcal{X}}
\newcommand{\cz}{\mathcal{Z}}
\newcommand{\Spec}{\operatorname{Spec}}
\newcommand{\sSpec}{\operatorname{\mathcal{S}pec}}
\newcommand{\secretnote}[1]{}
\newcommand{\lremind}[1]{{}}
\newcommand{\im}{\operatorname{im}}
\newcommand{\comment}[1]{}
\begin{document}
\pagestyle{plain}

\title{Recasting Results in Equivariant Geometry\\{\tiny Affine Cosets, Observable Subgroups and Existence of Good Quotients}}

\date{\today}

\author{Jarod D. Alper and Robert W. Easton}
\thanks{2000 \emph{Mathematics Subject Classification}. Primary 14L24, 14L30}

\email{jarod@math.columbia.edu \\ easton@math.utah.edu}

\begin{abstract}
Using the language of stacks, we recast and generalize a selection of results in equivariant geometry.
\end{abstract}
\maketitle


\vspace{-0.2in}
{\parskip=12pt 

\section{Introduction}
When an algebraic group $G$ acts on a variety $X$, there is a precise dictionary between the $G$-equivariant geometry of $X$ and the geometry of the quotient stack $[X/G]$.  This is typical of the strong interplay between equivariant geometry and algebraic stacks. Indeed, results (as well as their proofs) in the theory of algebraic stacks are often inspired by analogous results in equivariant geometry. As the simplest stacks are quotient stacks, they are fertile testing grounds for more general results. Conversely, algebraic stacks can prove quite useful for proving results in equivariant geometry.  The purpose of the present paper is to provide some examples of this power, reproving and generalizing several theorems in equivariant geometry via the language of algebraic stacks.


We begin in Section \ref{section-dictionary} by summarizing the relationship between the equivariant geometry of a scheme and the geometry of its corresponding quotient stack.  In Section \ref{section-goods}, we review the classical notion of a good quotient and the more modern notion of a good moduli space, and explore the relationship between them.  As a result, we recover and generalize \cite[Thm. B]{bbs_three_theorems}:

\begin{thm} \label{theorem-B}
Let $G \to S$ be an affine, linearly reductive group scheme acting on an algebraic space $X$, and suppose $X$ admits a $G$-invariant affine morphism $f: X \to Z$ to an algebraic space.  Then there exists a good quotient $\pi: X \to Y$ with $Y$ an algebraic space.
\end{thm}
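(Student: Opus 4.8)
The plan is to pass to the quotient stack and reduce the statement to the existence of a good moduli space. Form $\cX := [X/G]$ (an algebraic stack, $G$ being flat, affine and of finite presentation over $S$). Since $f$ is $G$-invariant it factors uniquely through the projection $q\colon X\to\cX$, giving $\bar f\colon \cX\to Z$ with $f=\bar f\circ q$. By the dictionary of Section~\ref{section-dictionary} together with the comparison between good quotients and good moduli spaces in Section~\ref{section-goods}, it is enough to exhibit a good moduli space $\phi\colon\cX\to Y$ with $Y$ an algebraic space: the composite $\pi:=\phi\circ q\colon X\to Y$ is then the desired good quotient. The candidate for $Y$ is the relative spectrum $Y:=\sSpec_Z(\bar f_*\oh_\cX)$; for this to make sense I first need $\bar f$ to be cohomologically affine, so that $\bar f_*\oh_\cX$ is a quasi-coherent $\oh_Z$-algebra and $Y$ is an algebraic space affine over $Z$. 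The morphism $\phi\colon\cX\to Y$ is then the one corresponding, under the universal property of $\sSpec_Z$, to the identity map of $\bar f_*\oh_\cX$.

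Establishing that $\bar f$ is cohomologically affine is the heart of the matter, and the only place where linear reductivity of $G$ is used. Write $BG:=[S/G]$; there is a canonical morphism $\rho\colon\cX\to BG$ with $\cX\times_{BG}S\cong X$ compatibly with $q$. Viewing $Z$ as an algebraic space over $S$, I factor $\bar f$ as
\[
\cX \xrightarrow{\;(\bar f,\rho)\;} Z\times_S BG \xrightarrow{\;\operatorname{pr}_Z\;} Z .
\]
The fppf surjection $Z = Z\times_S S\to Z\times_S BG$ (a base change of the $G$-torsor $S\to BG$) pulls the first arrow back to the affine morphism $f\colon X\to Z$; since affineness is fppf-local on the target, $(\bar f,\rho)$ is affine, hence cohomologically affine. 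The second arrow is the base change along $Z\to S$ of $BG\to S$, which is cohomologically affine by the very definition of linear reductivity of $G\to S$; cohomological affineness being stable under base change, $\operatorname{pr}_Z$ is cohomologically affine as well. As $\bar f$ is plainly quasi-compact and quasi-separated (\'etale-locally on $Z$ it is the quotient of an affine scheme by $G$), the composite $\bar f$ is cohomologically affine. (Alternatively one can reduce directly to the case $Z$ affine by \'etale descent on $Z$, where the assertion becomes exactness of the $G$-invariants functor on comodules, another formulation of linear reductivity.)

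It remains to check that $\phi\colon\cX\to Y$ is a good moduli space; with $\bar f$ cohomologically affine in hand this is purely formal. Let $g\colon Y\to Z$ be the structure morphism, which is affine, in particular separated. First, $\oh_Y\to\phi_*\oh_\cX$ is an isomorphism: the functor $g_*$ is exact and conservative on quasi-coherent sheaves, and it carries this map to the identity of $g_*\oh_Y = \bar f_*\oh_\cX = g_*\phi_*\oh_\cX$ by construction of $\phi$. Second, $\phi$ is cohomologically affine: this is the cancellation property for cohomologically affine morphisms, since $g\circ\phi=\bar f$ is cohomologically affine and $g$ is affine — concretely, the graph $\cX\to\cX\times_Z Y$ is a base change of the (closed immersion) diagonal of $g$, hence affine, while $\cX\times_Z Y\to Y$ is a base change of $\bar f$, hence cohomologically affine, so $\phi$ is a composite of cohomologically affine morphisms. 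Therefore $\phi$ is a good moduli space of $\cX$ with $Y$ an algebraic space, and unwinding the dictionary of Section~\ref{section-goods} produces the good quotient $\pi\colon X\to Y$ (which is moreover affine, since $\pi$ followed by the affine morphism $g$ is the affine morphism $f$). The only genuine obstacle is the cohomological affineness of $\bar f$ treated in the second paragraph; everything after that is a formal consequence of the good moduli space formalism.
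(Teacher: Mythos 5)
Your proof is correct and follows essentially the same route as the paper: you show $[X/G]\to Z$ is cohomologically affine by factoring through $Z\times_S BG\cong [Z/G]$ (the paper's Lemma before Proposition \ref{proposition-good-good}), then take $Y=\sSpec_Z \bar f_*\oh_{[X/G]}$ and verify the good moduli space property via the graph factorization (the paper's Proposition \ref{proposition-generalization-B}), and finally translate back via Corollary \ref{corollary-good-good}. The only difference is that you have inlined the proofs of those intermediate results rather than citing them.
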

\vspace{-0.2in}
\noindent Note in particular that the normality hypothesis of \cite[Thm. B]{bbs_three_theorems} has been removed, thereby answering \cite[Question pg. 149]{bbs_three_theorems}.

In Section \ref{section-affine-cosets}, we quickly recover Matsushima's theorem (see \cite{matsushima}, \cite{bb_homogeneous}, \cite{haboush_stab} and \cite{richardson}) using tools developed in Section \ref{section-goods}:

\begin{thm} \label{theorem-matsushima}
Suppose $G \to S$ is an affine, linearly reductive group scheme and $H \subseteq G$ is a flat, finitely presented, separated subgroup scheme.  Then the following are equivalent:
\begin{enumeratei}
\item $H \to S$ is linearly reductive;
\item $G/H \to S$ is affine;
\item the functor $\cF \mapsto \text{Ind}_H^G \cF$ from $\text{QCoh}^H(S)$ to $\text{QCoh}^G(S)$ is exact.
\end{enumeratei}
\end{thm}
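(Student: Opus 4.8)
The plan is to funnel all three conditions through the single morphism of quotient stacks $f\colon BH\to BG$ induced by $H\subseteq G$, using the dictionary of Section~\ref{section-dictionary} and the good-moduli-space machinery of Section~\ref{section-goods}. Under the identifications $\text{QCoh}(BH)\simeq\text{QCoh}^H(S)$ and $\text{QCoh}(BG)\simeq\text{QCoh}^G(S)$, the pushforward $f_*$ is exactly the induction functor $\text{Ind}_H^G$, so condition~(iii) says precisely that $f_*$ is exact, i.e. that $f$ is cohomologically affine. Writing $\pi_H\colon BH\to S$ and $\pi_G\colon BG\to S$ for the structure morphisms, linear reductivity of $H$ means that $(\pi_H)_*=(-)^H$ is exact, and since moreover $\cO_S\xrightarrow{\sim}(\pi_H)_*\cO_{BH}$, condition~(i) says exactly that $\pi_H$ is a good moduli space morphism. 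Finally, the point $S\to BG$ classifying the trivial $G$-torsor is an fppf presentation of $BG$, and the base change of $f$ along it is the morphism $G/H\to S$ (an $H$-torsor together with a trivialization of its associated $G$-torsor is the same datum as an $H$-coset in $G$); since affineness is fppf-local on the target, $f$ is affine if and only if $G/H\to S$ is affine, which is condition~(ii).

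With these translations in place I would run the cycle $(\mathrm{i})\Rightarrow(\mathrm{ii})\Rightarrow(\mathrm{iii})\Rightarrow(\mathrm{i})$. For $(\mathrm{ii})\Rightarrow(\mathrm{iii})$: if $G/H\to S$ is affine then $f$ is affine, hence cohomologically affine, hence $f_*$ is exact. For $(\mathrm{iii})\Rightarrow(\mathrm{i})$: since $\pi_H=\pi_G\circ f$ we have $(\pi_H)_*=(\pi_G)_*\circ f_*$, a composite of exact functors --- $f_*$ by~(iii) and $(\pi_G)_*=(-)^G$ because $G$ is linearly reductive --- so $(\pi_H)_*$ is exact, and together with $\cO_S\xrightarrow{\sim}(\pi_H)_*\cO_{BH}$ this is~(i).

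The one implication with real content is $(\mathrm{i})\Rightarrow(\mathrm{ii})$, for which I would invoke Section~\ref{section-goods} (this step is essentially the content of Theorem~\ref{theorem-B}). Identify $BH$ with the quotient stack $[(G/H)/G]$ for the left translation action of $G$ on $G/H$ --- a standard instance of the dictionary, since $G$ acts transitively on $G/H$ with stabilizer $H$. Under this identification $\pi_H$ becomes the structure morphism $[(G/H)/G]\to S$, which by~(i) is a good moduli space morphism onto an algebraic space, so by the comparison between good moduli spaces of quotient stacks and good quotients, $G/H\to S$ is a good quotient of $G/H$ by $G$ and in particular is affine. More explicitly: the atlas $G/H\to[(G/H)/G]$ is a $G$-torsor, hence an affine morphism because $G$ is affine over $S$, while $[(G/H)/G]\to S$ is cohomologically affine by~(i); composing, $G/H\to S$ is a cohomologically affine, quasi-compact, quasi-separated morphism of algebraic spaces, and the relative form of Serre's criterion recorded in Section~\ref{section-goods} upgrades it to an affine morphism. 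Alternatively, one applies Theorem~\ref{theorem-B} directly to the free right translation action of $H$ on $G$ and the $H$-invariant affine morphism $G\to S$: the resulting good quotient must be the torsor $G\to G/H$, and the construction places $G/H$ affine over $S$.

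I expect $(\mathrm{i})\Rightarrow(\mathrm{ii})$ to be the main obstacle precisely because cohomological affineness does not, for morphisms of stacks, imply affineness --- e.g. $BG\to S$ is cohomologically affine but never affine unless $G$ is finite --- so neither~(i) nor~(iii) yields affineness of $f$ formally. What rescues the argument is the presentation $BH\simeq[(G/H)/G]$ whose atlas $G/H\to[(G/H)/G]$ is itself an affine morphism, being a torsor under the affine group $G$; this is what lets one descend from the stacky statement to a statement about algebraic spaces, where Serre's criterion applies. One also checks en route that $G/H$ is an algebraic space, which is standard given that $H$ is flat, finitely presented and separated over $S$.
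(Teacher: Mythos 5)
Your proof is correct and follows essentially the same route as the paper: all three conditions are funneled through the morphism $f\colon BH\to BG$ and the cartesian square identifying $G/H\to S$ as its base change along $S\to BG$, with (i)$\Rightarrow$(ii) obtained exactly as in the paper by composing the affine atlas $G/H\to BH$ with the cohomologically affine $BH\to S$ and applying Serre's criterion, and the converse obtained by descent of affineness. The only cosmetic difference is that you close the loop through (iii) by composing the exact functors $(\pi_G)_*$ and $f_*$, whereas the paper invokes the relative Serre criterion for the representable morphism $BH\to BG$; both are valid.
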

\noindent This is a prototypical example of the power of algebraic stacks in the the study of equivariant problems. (This theorem was also proved in \cite[Thm. 12.15]{alper_good} using the same techniques, but we include the proof again here to emphasize the convenience of the language of stacks.)

Section \ref{section-observable} focuses on properties of observable subgroup schemes.  Recall that, when working over a field,  a subgroup scheme $H \subset G$ is {\em observable} if every finite dimensional $H$-representation is a sub-$H$-representation of a finite dimensional $G$-representation (see Definition \ref{def-observable} for the general case.) We find the following characterization of such subschemes:

\begin{thm}  \label{theorem-observable}
Let $G \to S$ be a flat, finitely presented, quasi-affine group scheme and $H \subseteq G$ a flat, finitely presented, quasi-affine subgroup scheme.  The following are equivalent:
\begin{enumeratei}
\item $H$ is observable;
\item for every quasi-coherent $\oh_S[H]$-module $\cF$, the counit morphism of the adjunction, $\text{Ind}_{H}^G \cF \to \cF$, is a surjection of $\oh_S[H]$-modules;
\item $BH \to BG$ is quasi-affine;
\item $G/H \to S$ is quasi-affine.
\end{enumeratei}
If $S$ is noetherian, then the above are also equivalent to:
\begin{enumeratei} \setcounter{enumi}{3}
\item every coherent $\oh_S[H]$-module is a quotient of a coherent $\oh_S[G]$-module; and
\item for every coherent $\oh_S[H]$-module $\cF$, the counit morphism of the adjunction, $\text{Ind}_{H}^G \cF \to \cF$, is a surjection of $\oh_S[H]$-modules.
\end{enumeratei}
\end{thm}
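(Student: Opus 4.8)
The strategy is to route everything through the quotient stack picture, so that statements about representations and induction become statements about (quasi-)affineness of the morphism $BH \to BG$ and of $G/H \to S$. The key technical input is the interpretation of induction: for a quasi-coherent $\oh_S[H]$-module $\cF$, viewed as a quasi-coherent sheaf on $BH$, the induced module $\text{Ind}_H^G \cF$ is $p_* q^* \cF$ where $q\colon BH \to BG$ is the natural map (so $q^*$ is restriction of representations) and $p$ is... actually the cleanest formulation is that $\text{Ind}_H^G$ is right adjoint to restriction $\text{QCoh}(BG)\to \text{QCoh}(BH)$, realized as pushforward along $BH \to BG$ when that morphism is nice enough, and the counit $\text{Ind}_H^G \cF \to \cF$ is the adjunction counit for $(q^*, q_*)$.

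First I would establish the equivalence of (iii) and (iv): this is a base-change/descent statement, since $G/H \to S$ is the fiber product $BH \times_{BG} S$ (using that $S \to BG$ is the universal $G$-torsor, which is faithfully flat), so $G/H \to S$ is quasi-affine if and only if $BH \to BG$ is quasi-affine, as quasi-affineness is fppf-local on the target. Second, I would prove (ii) $\Leftrightarrow$ (iii): a morphism of algebraic stacks (or schemes) is quasi-affine precisely when it is quasi-compact, quasi-separated, and the counit $f^* f_* \cF \to \cF$ is surjective for every quasi-coherent $\cF$ — this is the standard characterization of quasi-affine morphisms via the canonical map to the relative spectrum of $f_* \oh$ being an open immersion, rephrased sheaf-theoretically. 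Applied to $q\colon BH \to BG$, $q_* q^* \cF \to \cF$ being surjective for all $\cF$ is exactly condition (ii) once one identifies $q_* q^* = \text{Ind}_H^G$. Third, (i) $\Leftrightarrow$ (ii) (resp. over noetherian $S$, also (v) and (vi)): condition (i), observability, unwinds — using Definition \ref{def-observable} — to say that every (finite/coherent) $H$-representation embeds in a $G$-representation, equivalently is a quotient of the restriction of a $G$-representation (dualizing, in the representation-theoretic setting), and the universal such is $\text{Ind}_H^G$ of the representation, giving surjectivity of the counit; the noetherian hypothesis is what lets one pass between "coherent" and "quasi-coherent" via writing quasi-coherent sheaves as filtered colimits of coherent ones and checking surjectivity on these.

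The main obstacle I anticipate is twofold. First, there is a duality/variance subtlety: in the classical field case "observable" is phrased via \emph{sub}representations, while the natural stack-theoretic statement (ii) is about a \emph{surjection} (the counit). Reconciling these requires care with duals of (possibly infinite-dimensional) modules over a general base $S$, and is presumably why the theorem is stated with the counit surjectivity rather than embeddings; one must check the definition in the excerpt (Definition \ref{def-observable}) is set up to make this painless. Second, and more seriously, proving the characterization "quasi-affine $\iff$ quasi-compact, quasi-separated, and $f^*f_*\cF \to \cF$ surjective for all quasi-coherent $\cF$" in the generality of algebraic stacks — rather than schemes — needs one to know that $q_* \oh_{BH}$ is a quasi-coherent $\oh_{BG}$-algebra of finite type (or at least that forming $\underline{\Spec}_{BG} q_*\oh$ and the canonical map $BH \to \underline{\Spec}_{BG} q_*\oh$ behaves well), which is where the flat, finitely presented, quasi-affine hypotheses on $G$ and $H$ get used, together with results from Section \ref{section-dictionary} translating these into properties of $BH \to BG$. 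Once that local structure theory is in hand, all four (resp. six) conditions collapse onto the single assertion that $BH \to BG$ is quasi-affine.
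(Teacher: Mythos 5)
Your plan matches the paper's proof essentially step for step: the paper proves (iii)$\Leftrightarrow$(iv) by descent along the fppf cover $S \to BG$ using $G/H \cong BH \times_{BG} S$, and collapses all the remaining equivalences onto exactly the characterization of quasi-affine representable morphisms that you identify as the crux (their Proposition \ref{proposition-quasi-affine}, which also supplies the coherent variants (v), (vi) by direct-limit arguments), with Definition \ref{def-observable} already phrased via quotients so the sub-vs-quotient dualization worry never arises. The only caveats are cosmetic: the counit is $q^*q_*\cF \to \cF$ rather than $q_*q^*\cF \to \cF$, and the quasi-affineness criterion requires the target to have quasi-affine diagonal --- which $BG$ does here, precisely because $G \to S$ is assumed quasi-affine.
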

\noindent The proof of the above theorem follows directly from the observation that a representable morphism $f: \cX \to \cY$ of algebraic stacks is quasi-affine if and only if the adjunction morphism $f^*f_*\cF \to \cF$ is surjective for every quasi-coherent $\oh_{\cX}$-module $\cF$ (see Proposition \ref{proposition-quasi-affine}).


Lastly, in Section \ref{section-gluing} we analyze the existence of good moduli spaces, ultimately recovering a modified version of \cite[Thm. C]{bbs_three_theorems}:

\begin{thm}  \label{theorem-C}
Let $G$ be a connected algebraic group acting on a scheme $X$, and suppose that for every pair of points $x,y\in X$, there exists a $G$-invariant open subscheme  $U_{xy} \subseteq X$ that contains $x$ and $y$ and admits a good quotient.  Then $X$ admits a good quotient.
\end{thm}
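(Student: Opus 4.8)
The plan is to pass to the quotient stack $\cX = [X/G]$ and construct a good moduli space for it. When $G$ is linearly reductive (the setting of \cite{bbs_three_theorems}), the existence of a good quotient of a $G$-scheme $U$ is equivalent to that of a good moduli space of $[U/G]$ (Section~\ref{section-goods}); and since $X \to \cX$ is affine, a good moduli space $\phi\colon \cX \to Y$ with $Y$ a scheme descends to a good quotient $X \to Y$, its defining properties being exactly the stacky ones recalled in Section~\ref{section-goods}. So it suffices to produce such a $Y$, and the hypothesis becomes: every pair of points of $|\cX|$ lies in an open substack admitting a good moduli space. Two harmless reductions: $X$, being a scheme, is quasi-compact; and $G$, being connected, fixes each connected component of $X$, so — a finite disjoint union of good moduli spaces again being one — we may assume $X$, hence $\cX$, connected. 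Taking $x=y$ in the hypothesis shows the relevant open substacks cover $\cX$, so we fix a finite subcover $\cX = \cU_1 \cup \dots \cup \cU_n$ with good moduli spaces $\phi_i \colon \cU_i \to Z_i$.

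The heart of the argument is an equivalence relation on $|\cX|$: declare $p \sim q$ if $\overline{\{p\}} \cap \overline{\{q\}} \neq \emptyset$. Reflexivity and symmetry are immediate, and transitivity is where the hypothesis enters. Given $p \sim q \sim s$, witnessed by $r \in \overline{\{p\}} \cap \overline{\{q\}}$ and $t \in \overline{\{q\}} \cap \overline{\{s\}}$, choose a good moduli space $\psi \colon \cU \to Z$ with $\cU$ open containing $r$ and $t$. Since open substacks are stable under generization and each of $p, q, s$ generizes to a point ($r$ or $t$) of $\cU$, all of $p,q,s,r,t$ lie in $\cU$; and a defining feature of good moduli spaces is that $\psi(a) = \psi(b)$ precisely when $\overline{\{a\}} \cap \overline{\{b\}} \neq \emptyset$, so $\psi(p) = \psi(r) = \psi(q) = \psi(t) = \psi(s)$, forcing $\overline{\{p\}} \cap \overline{\{s\}} \neq \emptyset$, i.e. $p \sim s$. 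The same property shows each $\sim$-class contains exactly one closed point and that $p \sim q$ holds iff $\psi(p) = \psi(q)$ for one, hence any, good moduli space $\psi$ of an open substack containing both.

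Now set $Y := |\cX|/\!\sim$ with the quotient topology and $\pi\colon \cX \to Y$ the projection; the goal is to realize $(Y, \pi_*\oh_{\cX})$ as a scheme with $\pi$ a good moduli space. One covers $Y$ by local models: every point of $Y$ is the class of a closed point $c \in |\cX|$, which lies in some $\cU_i$ (it is a single point, and closed in the open $\cU_i$); shrinking $Z_i$ to an open neighbourhood of $\phi_i(c)$ (a closed point of $Z_i$) yields an open substack $\cV \subseteq \cU_i$ containing $c$ together with a good moduli space that is, roughly, a chart near $[c]\in Y$. What remains is to check that these $\cV$ may be chosen $\sim$-saturated, i.e. $\pi^{-1}(\pi(\cV)) = \cV$; granting that, their good moduli spaces are open subschemes of $Y$ that cover it and glue $\pi$ to a good moduli space, by the (formal) gluing criterion for good moduli spaces along saturated opens.

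The main obstacle is exactly this saturation. An open substack admitting a good moduli space need not be $\sim$-saturated, because open substacks are stable under generization but \emph{not} under specialization: the closed point of a $\sim$-class meeting $\cV$ may lie outside $\cV$, dragging with it other points over $[c]$. Overcoming this is the combinatorial core of \cite[Thm.~C]{bbs_three_theorems}: using the finite cover $\{\cU_i\}$ one argues — essentially by an induction that reduces to handling two members at a time — that the $\cU_i$, or a suitable common refinement, can be enlarged to $\sim$-saturated opens still carrying good moduli spaces, and it is here that connectedness of $G$ is used, to control how closed points and orbit closures distribute across the cover. Once such a $\sim$-saturated refinement is in hand the remaining gluing is formal and produces the good moduli space $\cX \to Y$, hence the good quotient $X \to Y$. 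I expect the construction of the saturated refinement to be the step demanding real care; the reduction to stacks, the relation $\sim$, and the final gluing should all be routine by comparison.
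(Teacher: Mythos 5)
Your setup---passing to $\cX=[X/G]$, reducing to a finite cover $\{\cU_i\}$ by opens with good moduli spaces, and introducing the relation $p\sim q \iff \overline{\{p\}}\cap\overline{\{q\}}\neq\emptyset$ (whose transitivity argument via a common $\cU_{rt}$ is essentially the paper's use of the pairwise hypothesis)---matches the paper's strategy. But the proof stops exactly where the theorem actually lives. You write that the $\cU_i$ ``can be enlarged to $\sim$-saturated opens still carrying good moduli spaces'' by ``an induction that reduces to handling two members at a time,'' and that connectedness of $G$ is used ``to control how closed points and orbit closures distribute across the cover.'' None of this is an argument, and the direction is wrong: one cannot in general enlarge an open with a good moduli space to a saturated one; the correct move is to \emph{shrink}. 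Concretely, for a closed substack $\cZ\subseteq\cX$ let $F_{\cX}(\cZ)$ be the set of points whose closure meets $\cZ$. If $F_{\cX}(\cZ)$ is closed for every $\cZ$, then for any open $\cU$ with complement $\cZ$ the open $\cV=\cX\setminus F_{\cX}(\cZ)\subseteq\cU$ is saturated and still contains every closed point of $\cX$ lying in $\cU$; these shrunken opens cover $\cX$ and glue (Lemmas \ref{lemma-saturated-opens}, \ref{lemma-glue}, Proposition \ref{prop-good}).

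The missing content is therefore the closedness of $F_{\cX}(\cZ)$, which splits into two steps, neither present in your proposal. Stability under specialization does follow from the pairwise hypothesis (cover $\cX$ by the $\cU_{xy_i}$ through a fixed bad point $x$ and use that $F$ is closed inside each piece by Lemma \ref{lemma-f-closed}). But a specialization-stable set need not be closed unless it is \emph{constructible}, and this is precisely the gap the paper identifies in the original argument of \cite{bbs_three_theorems} (Remark \ref{remark-constructibility}). Proving constructibility is where connectedness of $G$ genuinely enters: via Sumihiro's theorem one reduces to a smooth (then normal, then general) $X$ admitting a locally quasi-finite, universally submersive cover by a stack with a good moduli space, and pushes forward the closed set $F_{\cW}(f^{-1}(\cZ))$ (Lemmas \ref{lemma-f-constructible} and \ref{lemma-constructibility}). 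Without this your equivalence classes give only a set-theoretic quotient $|\cX|/\!\sim$ with no way to exhibit saturated open charts, so the final gluing never gets off the ground. (A smaller point: you assume $G$ linearly reductive to invoke the good-quotient/good-moduli-space dictionary, whereas the theorem as stated only assumes $G$ connected.)
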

\noindent Note that here we assume the group $G$ is connected, but not necessarily reductive.  In fact, it appears the proof of \cite[Thm. C]{bbs_three_theorems} is incomplete, as the constructibility of certain subsets is never verified (see Remark \ref{remark-constructibility}).  It is in the verification of that constructibility that we need to impose the connected hypothesis, as well as why we need to work with group actions rather than more general algebraic stacks.  We expect, however, a stronger version of the above theorem (as well as of Lemma \ref{lemma-constructibility}) to hold.

\begin{remark}
It is also possible to show that an analogue of \cite[Thm. A]{bbs_three_theorems} holds using similar---but significantly more involved---methods.
\end{remark}


\subsection*{Notation}
We fix an arbitrary (quasi-separated) base algebraic space $S$. All schemes, algebraic spaces and algebraic stacks are assumed to have quasi-compact and separated diagonals.  We use calligraphic letters $\cX, \cY, \cZ, ...$ to denote algebraic stacks and roman letters $X, Y, Z, ...$ to denote schemes and algebraic spaces.

Given an algebraic stack $\cx$, we denote by $|\cX|$ the topological space whose points correspond to equivalence classes of $\oh_S$-field-valued points of $\cx$ (see \cite[Chap. 5]{lmb}).  Any point $x \in |\cX|$ has a residue field $k(x)$, which is the coarse moduli space of the residue gerbe $\cG_x$ in $\cX$; furthermore, there exists a representative $\Spec k \to \cX$ of $x$ with $k(x)/k$ finite (see \cite[Chap. 11]{lmb}, \cite[Thm B.2]{rydh_etale-devissage}).  A point $x \in |\cX|$ induces a morphism $s: \Spec k(x) \to S$.  We will denote by $\cX_s$ the fiber product $\cX \times_S \Spec k(x)$.



\section{$G$-equivariant geometry of $X$ vs. geometry of $[X/G]$} \label{section-dictionary}

For simplicity, assume momentarily $S = \Spec k$, with $k$ an algebraically closed field.  If $G \to \Spec k$ is a finite type, affine group scheme acting on a finite type $k$-scheme $X$, then we have the following dictionary between the $G$-equivariant geometry of $X$ and the geometry of $[X/G]$: 

\begin{center}
\begin{tabular}{c|c}
{\bf $G$-equivariant geometry of $X$}			& {\bf Geometry of $[X/G]$} \\
\hline\\
orbit of $x \in X(k)$ 						& point\ $\Spec k \to [X/G]$ \\
$G$-invariant map $X \to Z$				& morphism\ $[X/G] \to Z$ \\
$\Gamma(X, \oh_X)^G$					& $\Gamma([X/G], \oh_{[X/G]})$ \\
quasi-coherent $\oh_X[G]$-module $F$	& quasi-coherent $\oh_{[X/G]}$-module\ $\cF$\\
$\Gamma(X,F)^G$ 						& $\Gamma([X/G], \cF)$\\
$G$-linearization on $X$					& line bundle on $[X/G]$\\
geometric quotient $X \to Y$				& coarse moduli space $[X/G] \to Y$\\
good quotient $X \to Y$					& good moduli space $[X/G] \to Y$\\
\end{tabular}
\end{center}

\begin{example}  Suppose $X = \Spec k$ is a point, and write $BG = [ \Spec k / G]$. Then quasi-coherent (resp., coherent) $\oh_{BG}$-modules correspond to $G$-representations (resp., of finite dimension).
\end{example}

\begin{remark}
\begin{enumeratei}
\item Technically, geometric quotients and coarse moduli spaces do not precisely agree under this dictionary. However, they do agree in the case of proper group actions.
\item The dictionary between good quotients and good moduli spaces is made precise in Section \ref{section-goods}.
\end{enumeratei}
\end{remark}

Returning now to the general case, suppose $G \to S$ is a flat, separated and quasi-compact group scheme over an arbitrary base $S$.  As usual, denote $BG = [S/G]$. Then a quasi-coherent (resp., coherent) $\oh_{BG}$-module corresponds to a $\oh_S[G]$-module (resp., of finite type).

A morphism $H \to G$ of flat, separated and quasi-compact group schemes induces a morphism $f: BH \to BG$ of algebraic stacks.  If, in addition, $H \hookrightarrow G$ is a subgroup scheme, then the diagram below is cartesian:
$$\xymatrix{
G/H \ar[r] \ar[d]		&S \ar[d] \\
BH  \ar[r]^f			& BG.
}$$
Using descent theory, we can therefore relate properties of the morphism $f: BH \to BG$ to properties of the quotient $G/H \to S$.

If $\cG$ is an $\oh_S[G]$-module, then $f^* \cG$ is the $\oh_S[H]$-module with the same underlying $\oh_S$-module as $\cG$, but with $H$-action induced from the morphism $H \to G$.
If $\cF$ is an $\oh_S[H]$-module, then $\text{Ind}_H^G \cF:= f_* \cF$ is the induced $\oh_S[G]$-module.  There is a natural morphism  $\text{Ind}_{H}^G \cF \to \cF$, corresponding to the adjunction morphism $f^*f_* \cF \to \cF$.

\section{Good quotients vs. good moduli spaces} \label{section-goods}

\subsection*{Good quotients}
Although the notion of a \emph{good quotient} was first explicitly written down by Seshadri in \cite[Definition 1.5]{seshadri_results}, the idea was already implicit in Mumford's theory of quotients by reductive groups (see \cite{git}).

\begin{defn} 
Suppose $G \to S$ is an affine group scheme acting on an algebraic space $X$.  We say a morphism $\pi: X \to Y$ to an algebraic space is a \emph{good quotient} if the following hold:
\begin{enumeratei}
\item $\pi$ is surjective, affine and $G$-invariant;
\item $\oh_Y \to \pi_* (\oh_X)^G$ is an isomorphism; and
\item for every morphism $Y' \to Y$ with base change $\pi': X \times_Y Y' \to Y'$, the following hold:
\begin{enumeratea}
\item for each closed $G$-invariant subspace $Z'\subseteq X \times_{Y} Y'$, the image $\pi'(Z') \subseteq Y'$ is closed; and
\item for each pair of closed $G$-invariant disjoint subspaces $Z'_1, Z'_2 \subseteq X \times_Y Y'$, the images $\pi'(Z'_1),\pi'(Z'_2)$ are disjoint.
\end{enumeratea}
\end{enumeratei}
\end{defn}

\begin{remark}
\begin{enumeratei}
\item When $G \to S$ is linearly reductive, for $\pi: X \to Y$ to be a good quotient, it is sufficient to require only that $\pi$ is affine and $G$-invariant, and $\oh_Y \to \pi_*(\oh_X)^G$ be an isomorphism (see Corollary \ref{corollary-good-good}).
\item As in other papers (see \cite{bbs_three_theorems}, \cite{bb_book} and \cite[Tag 04AC]{stacks_project}), we have removed the assumption in \cite[Definition 1.5]{seshadri_results} that $X$ and $Y$ are schemes.  We note that the argument of \cite[Remark 0.5]{git} implies only that a good quotient is universal for $G$-equivariant maps to \emph{schemes}.  However, \cite[Theorem 6.6]{alper_good} implies that good quotients are universal for $G$-equivariant maps to algebraic spaces when $X$ is noetherian; see Corollary \ref{corollary-universal}.

\item As in \cite{bb_book} and \cite[Tag 04AC]{stacks_project}, we require that property (iii) holds for arbitrary base change.  We note, however, that it is equivalent to restrict to flat morphisms $Y' \to Y$.
\end{enumeratei}
\end{remark}

\subsection*{Linearly reductive groups}
We recall the definition of a {\em linearly reductive} group scheme. Let $\text{QCoh(S)}$ denote the category of quasi-coherent $\oh_S$-modules, and $\text{QCoh}^G(S)$ denote the category of quasi-coherent $\oh_S[G]$-modules.

\begin{defn}  A flat, finitely presented, affine group scheme $G \to S$ is \emph{linearly reductive} if the functor $\cF \mapsto \cF^G$ from $\text{QCoh}^G(S)$ to $\text{QCoh}(S)$ is exact.
\end{defn}

\begin{remark}  It is not essential to assume $G \to S$ is affine (cf. \cite[Chapter 12]{alper_good}). It is only made here to simplify the discussion.
\end{remark}

The following is well known (see, for instance, \cite[Proposition 12.6]{alper_good}):
\begin{prop}  \label{lin_red_equiv_k}
Let $G \to \Spec k$ be a finite type, affine group scheme, with $k$ a field.  The following are equivalent:
\begin{enumeratei}
\item $G$ is linearly reductive;
\item the functor $V \mapsto V^G$ from $G$-representations to vector spaces is exact;
\item the functor $V \mapsto V^G$ from finite dimensional $G$-representations to vector spaces is exact;
\item every $G$-representation is completely reducible;
\item every finite dimensional $G$-representation is completely reducible; and
\item for every finite dimensional $G$-representation $V$ and nonzero $v \in V^G$, there exists $F \in (V^{\vee})^G$ with $F(v) \ne 0$.
\end{enumeratei}
\end{prop}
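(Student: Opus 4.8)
The plan is to establish the cycle (i) $\Leftrightarrow$ (ii) $\Leftrightarrow$ (iii) $\Rightarrow$ (v) $\Leftrightarrow$ (iv) together with (v) $\Rightarrow$ (vi) $\Rightarrow$ (iii), resting on two standard inputs. First, since $S=\Spec k$, the dictionary of Section~\ref{section-dictionary} identifies $\text{QCoh}(S)$ with $k$-vector spaces and $\text{QCoh}^G(S)$ with $\oh(G)$-comodules, i.e.\ $G$-representations, so that (i) and (ii) are literally the same statement. Second, the invariants functor $V\mapsto V^G=\Hom_{\text{QCoh}^G(S)}(k,V)$ is always left exact, so the exactness in (ii) and (iii) amounts to right exactness, i.e.\ to surjectivity of $V^G\to W^G$ for every surjection $V\twoheadrightarrow W$ of (finite-dimensional) representations.

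For (iii) $\Rightarrow$ (ii) I would use local finiteness of comodules: every $G$-representation over a field is the filtered union of its finite-dimensional subrepresentations. Given a surjection $V\twoheadrightarrow W$ and $w\in W^G$, pick any preimage $v\in V$, let $V''\subseteq V$ be the finite-dimensional subrepresentation it generates, and set $W''=\im(V''\to W)$; then $V''\twoheadrightarrow W''$ is a surjection of finite-dimensional representations with $w\in(W'')^G$, and (iii) lifts $w$ into $(V'')^G\subseteq V^G$. For (iii) $\Rightarrow$ (v): for a finite-dimensional $V$ with subrepresentation $W$, apply the exact functor $(-)^G$ to the surjection $\Hom_k(V,W)\twoheadrightarrow\Hom_k(W,W)$ of finite-dimensional representations to lift $\mathrm{id}_W$ to a $G$-equivariant projection $\pi\colon V\to W$; then $V=W\oplus\ker\pi$, so every subrepresentation of $V$ is a direct summand and $V$ is semisimple by the usual argument. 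The equivalence (iv) $\Leftrightarrow$ (v) is formal: (iv) $\Rightarrow$ (v) is immediate, while conversely any representation is a sum of its finite-dimensional subrepresentations, each a sum of irreducible subrepresentations by (v), hence a sum of simple subobjects, hence semisimple.

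For (v) $\Rightarrow$ (vi): given $0\neq v\in V^G$, the line $kv$ is a trivial subrepresentation, so (v) provides a $G$-stable complement; composing the $G$-equivariant projection $V\to kv$ with the ($G$-invariant, since $kv$ is trivial) functional $kv\to k$ sending $v\mapsto 1$ gives $F\in(V^\vee)^G$ with $F(v)=1$. The one step that is not purely formal---and which I expect to be the main point---is (vi) $\Rightarrow$ (iii). Given a surjection $\phi\colon V\twoheadrightarrow W$ of finite-dimensional representations and $w\in W^G$, set $L=kw\subseteq W$ and $E=\phi^{-1}(L)\subseteq V$, so $\psi:=\phi|_E\colon E\twoheadrightarrow L$ is a $G$-equivariant surjection onto the trivial one-dimensional representation $L$. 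Dualizing, $\psi^\vee$ embeds $L^\vee$ into $E^\vee$; let $v\in(E^\vee)^G$ be the image of a basis vector $e^\vee$ of $L^\vee$. Applying (vi) to the finite-dimensional representation $E^\vee$ yields $F\in\bigl((E^\vee)^\vee\bigr)^G=E^G$ with $F(v)\neq 0$; since $F(v)=\langle\psi(F),e^\vee\rangle$, this forces $\psi(F)\neq 0$ in $L$, so after rescaling $F$ we obtain an element of $E^G\subseteq V^G$ mapping to $w$. Hence $V^G\to W^G$ is surjective, which with left exactness gives (iii) and closes the cycle. The genuinely nontrivial ingredients are thus local finiteness of comodules and the standard structure theory of semisimple objects; the rest is bookkeeping.
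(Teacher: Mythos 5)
Your argument is correct and complete. Note that the paper itself gives no proof of this proposition --- it is stated as well known with a pointer to \cite[Proposition 12.6]{alper_good} --- so there is no in-paper argument to compare against; what you have written is the standard proof (local finiteness of comodules to pass between the finite-dimensional and general cases, the lifting of $\mathrm{id}_W$ along $\Hom_k(V,W)\twoheadrightarrow\Hom_k(W,W)$ to produce equivariant splittings, and the duality trick reducing surjectivity of $V^G\to W^G$ to condition (vi)), and it is exactly the kind of argument the cited reference carries out.
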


\subsection*{Cohomologically affine morphisms}

For an algebraic stack $\cx$, let $\text{QCoh}(\cx)$ denote the category of quasi-coherent $\oh_{\cx}$-modules.

\begin{defn} (\cite[Definition 3.1]{alper_good}) A morphism $f: \cX \to \cY$ of algebraic stacks is \emph{cohomologically affine} if it is quasi-compact and the push-forward functor $\cF \mapsto f_* \cF$ from $\text{QCoh}(\cX)$ to $\text{QCoh}(\cY)$ is exact.
\end{defn}

\begin{remark} Cohomological affineness possesses several nice properties, such as:
\begin{enumeratei}
\item A flat, finitely presented, affine group scheme $G \to S$ is linearly reductive if and only if $BG \to S$ is cohomologically affine.

\item If $f: \cX \to \cY$ is a representable morphism of algebraic stacks and $\cY$ has quasi-affine diagonal, then $f$ is cohomologically affine if and only if $f$ is affine.

\item Cohomologically affine morphisms are stable under composition and are local on the target under faithfully flat morphisms.

\item If $f: \cX \to \cY$ is a cohomologically affine morphism of algebraic stacks and $\cY$ has quasi-affine diagonal, then for any morphism $\cY' \to \cY$ of algebraic stacks, the base change $\cX \times_{\cY} \cY' \to \cY'$ is cohomologically affine.
\end{enumeratei}
See \cite[Chapter 3]{alper_good} for additional details.
\end{remark}

We note the following interesting and well-known consequence of these properties, which stresses the necessity of quasi-affineness of the diagonal in (iv) above:

\begin{prop}
If $G \to \Spec A$ is a finite type, quasi-affine group scheme over an Artin ring $A$, then $G \to \Spec A$ is affine.
\end{prop}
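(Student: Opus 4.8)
The plan is to reduce to the case in which $A$ is reduced --- hence a finite product of fields --- and then to realize $G \to \Spec A$ as a base change of the atlas $p \colon \Spec A \to BG$, which one checks is cohomologically affine and therefore affine by property (ii) above.

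To make the reduction, let $\mathfrak{n} \subseteq A$ be the nilradical; since $A$ is Artinian, $\mathfrak{n}$ is nilpotent and $A_{\mathrm{red}} := A/\mathfrak{n}$ is a finite product of fields. The closed immersion $G \times_A A_{\mathrm{red}} \hookrightarrow G$ is cut out by the nilpotent quasi-coherent ideal $\mathfrak{n}\oh_G$, and $G$ is a Noetherian --- in particular quasi-compact and separated --- scheme, being quasi-affine over the affine scheme $\Spec A$. Filtering an arbitrary quasi-coherent $\oh_G$-module by the powers of $\mathfrak{n}\oh_G$, whose successive quotients are modules over the affine scheme $G \times_A A_{\mathrm{red}}$ and so have vanishing higher cohomology, Serre's cohomological criterion for affineness shows that $G$ is affine as soon as $G \times_A A_{\mathrm{red}}$ is. Hence I may assume $A$ is a finite product of fields; then every $A$-module is flat, so $G \to \Spec A$ is flat and $BG = [\Spec A / G]$ is an algebraic stack.

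Now, because $G \to \Spec A$ is quasi-affine, the diagonal of $BG$ is quasi-affine: its base change along the faithfully flat atlas is the sheaf of isomorphisms of torsors, which is a form of $G$ and hence quasi-affine, since quasi-affineness descends along faithfully flat morphisms; for the same reason the atlas $p \colon \Spec A \to BG$ is representable. The morphism $p$ is quasi-compact, and flat base change along $p$ itself identifies $p^{*}p_{*}\cM$, for $\cM \in \text{QCoh}(\Spec A)$, with $s_{*}s^{*}\cM$, where $s \colon G = \Spec A \times_{BG} \Spec A \to \Spec A$ is either projection (both are the structure morphism). Since $G$ is quasi-compact and separated and every $A$-module is flat, this is the $A$-module $\Gamma(G, \oh_G) \otimes_A M$, and the functor $M \mapsto \Gamma(G, \oh_G) \otimes_A M$ is exact, again because $A$ is a product of fields. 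As $p$ is faithfully flat, $p^{*}$ is exact and conservative, so $p_{*}$ is exact; that is, $p$ is cohomologically affine. Since $BG$ has quasi-affine diagonal and $p$ is representable, property (ii) above shows that $p$ is affine, and hence so is its base change along the atlas, which is the structure morphism $G \to \Spec A$.

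I do not expect a serious obstacle: once one works with the atlas $\Spec A \to BG$ and appeals to property (ii), the argument is formal. The points deserving care are the opening reduction --- that affineness is insensitive to nilpotent thickenings --- and, above all, the observation that it is precisely the hypothesis ``$G$ quasi-affine'' that forces $BG$ to have quasi-affine diagonal, so that (ii) (and likewise (iv)) may be applied; that implication is really the entire content of the statement, which is why the result ``stresses the necessity of quasi-affineness of the diagonal'' in (iv).
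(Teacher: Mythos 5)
Your proof is correct and follows essentially the same route as the paper's: reduce to the reduced (field) case, observe that the atlas $\Spec A \to BG$ is cohomologically affine because its pushforward is tensoring with $\Gamma(G,\oh_G)$, and use the quasi-affineness of $G$ to give $BG$ a quasi-affine diagonal so that cohomological affineness upgrades to affineness and pulls back to $G \to \Spec A$. The only (harmless) differences are that you spell out the nilpotent-thickening reduction and the flat base change computation in more detail, and you apply property (ii) directly to the atlas before base-changing, whereas the paper first base-changes cohomological affineness via property (iv).
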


\begin{proof}
We may assume $A=k$ is a field, since a scheme is affine if and only if its reduction is affine.  Observe that if $G \to \Spec k$ is any finite group scheme, then $\pi: \Spec k \to BG$ is cohomologically affine.  Indeed, the push-forward by $\pi$ of a $k$-vector space $V$ corresponds to the $G$-representation $V \otimes_k \Gamma(G, \oh_G)$, where $\Gamma(G, \oh_G)$ is the left regular representation of $G$, and this functor is clearly exact.  If $G \to \Spec k$ is quasi-affine, then $BG \to \Spec k$ has quasi-affine diagonal.  Thus the base change $G \cong \Spec k \times_{BG} \Spec k \to \Spec k$ is cohomologically affine, and hence affine.
\end{proof}

\subsection*{Good moduli spaces}

\begin{defn} (\cite[Definition 4.1]{alper_good}) \label{definition-good}
 Let $\cX$ be an algebraic stack.  We say a quasi-compact morphism $\phi: \cX \to Y$, where $Y$ is an algebraic space, is a \emph{good moduli space} if:
\begin{enumeratei}
\item $\phi$ is cohomologically affine; and
\item $\oh_Y \to \phi_* \oh_{\cX}$ is an isomorphism.
\end{enumeratei}
\end{defn}

We summarize the basic properties of good moduli spaces:

\begin{prop} (\cite[Theorems 4.16 and 6.6]{alper_good},\cite[Theorem 6.3.3]{alper_adequate}) \label{properties-good}
Let $\cX$ be an algebraic stack and $\phi: \cX \to Y$ a good moduli space.  Then:
\begin{enumeratei}
\item
	$\phi$ is surjective, universally closed and universally submersive;
\item
 	if $Z_1, Z_2$ are closed substacks of $\cX$, then
$$  \im Z_1 \cap \im Z_2 = \im (Z_1 \cap Z_2), $$
where the intersections and images are scheme-theoretic;
\item
for each algebraically closed $\oh_S$-field $k$, there is an equivalence relation defined on the set of isomorphism classes of $k$-valued points $[\cX(k)]$, given by $x_1 \sim x_2 \in [\cX(k)]$ when $\overline{ \{x_1\}} \cap \overline{ \{x_2\} } \ne \emptyset$ in $|\cX \times_S k|$, which induces a bijective map $[\cX(k)]/\kern-4pt\sim \, \, \to Y(k)$;
\item if $\cX$ is locally noetherian, then
	$\phi$ is universal for maps to algebraic spaces (that is, for any morphism to an algebraic space $\psi: \cX \to Z$, there exists a unique map $\xi: Y \to Z$ such that $\xi \circ \phi = \psi$);
\item
	if $\cX$ is locally noetherian, then $Y$ is locally noetherian and $\phi_*$ preserves coherence;
\item
	if $\cX$ is finite type over a noetherian scheme $S$, then $Y$ is finite type over $S$; and
\item
	if $Y' \to Y$ is any morphism of algebraic spaces, then the base change $\cX \times_{Y'} Y \to Y'$ is a good moduli space.
\end{enumeratei}
\end{prop}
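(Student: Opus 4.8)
The plan is to deduce the statements from the formal properties of cohomologically affine morphisms recorded above---exactness of pushforward on quasi-coherent sheaves, stability under arbitrary base change over a target with quasi-affine diagonal (in particular over any algebraic space), and faithfully flat locality on the target---together with the defining isomorphism $\oh_Y\cong\phi_*\oh_{\cX}$. I would establish (vii) first, as it is used in every other part. Cohomological affineness of $\phi'\colon\cX\times_Y Y'\to Y'$ is immediate, since $Y$ is an algebraic space and hence has quasi-affine diagonal. For the isomorphism $\oh_{Y'}\to\phi'_*\oh_{\cX\times_Y Y'}$, both sides commute with flat base change on $Y'$, so the assertion is \'etale-local on $Y'$ and on $Y$; reducing to $Y=\Spec A$ and $Y'=\Spec B$ affine, I would factor $\Spec B\to\Spec A$ as a closed immersion $\Spec B\hookrightarrow\Spec A[x_\lambda]$ followed by the flat projection $\Spec A[x_\lambda]\to\Spec A$. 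The flat step is flat base change for quasi-coherent cohomology. For the closed-immersion step, with $I\subseteq A[x_\lambda]$ the defining ideal, one pushes forward $0\to I\oh_{\cX'}\to\oh_{\cX'}\to\oh_{\cX'}/I\oh_{\cX'}\to 0$ and uses exactness of $\phi_*$ together with $\phi_*\phi^*\cF\cong\cF$ (valid for cohomologically affine $\phi$ and any $\cF$ admitting a presentation by free modules, by right-exactness of $\phi^*$ and exactness of $\phi_*$) to identify $\phi_*(I\oh_{\cX'})$ with $I$; hence $\phi'_*\oh=A[x_\lambda]/I=B$.

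For (i), the crucial point is that a good moduli space morphism is surjective: for $y\in|Y|$, the fiber $\cX\times_Y\Spec k(y)\to\Spec k(y)$ is a good moduli space by (vii), so its ring of global sections is $k(y)\ne 0$ and the fiber is nonempty. Closedness follows: for a closed substack $Z\subseteq\cX$ the closed immersion $Z\hookrightarrow\cX$ is affine, so $Z\to Y$ is cohomologically affine; pushing forward $0\to\cI_Z\to\oh_{\cX}\to\oh_Z\to 0$ and using exactness of $\phi_*$ shows that the scheme-theoretic image $Y_0\subseteq Y$, cut out by $\phi_*\cI_Z$, satisfies $\oh_{Y_0}\cong(\phi|_Z)_*\oh_Z$, so $Z\to Y_0$ is again a good moduli space, hence surjective; thus $\phi(|Z|)=|Y_0|$ is closed. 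With (vii) this gives universal closedness, and a universally closed surjection is universally submersive. Part (ii) is then pure exactness: working \'etale-locally on $Y$ and writing $I_j=\phi_*\cI_{Z_j}$, one has $I_1+I_2=\phi_*(\cI_{Z_1}+\cI_{Z_2})$ because $\phi_*$ carries the image of $\cI_{Z_1}\oplus\cI_{Z_2}\to\oh_{\cX}$ to the image of $I_1\oplus I_2\to\oh_Y$; since $\cI_{Z_1}+\cI_{Z_2}$ is the ideal of $Z_1\cap Z_2$, its scheme-theoretic image is $V(I_1+I_2)=\im Z_1\cap\im Z_2$.

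For (iii), base change to $\Spec k$ via (vii) and invoke surjectivity from (i): every $\Spec k\to Y$ pulls back to a nonempty good moduli space over $\Spec k$, and a short argument shows its unique closed point is $k$-rational, so $[\cX(k)]\to Y(k)$ is onto. If $x_1,x_2\in\cX(k)$ have the same image $y$, they lie in the fiber $\cX_y:=\cX\times_Y\Spec k$, whose good moduli space is the single point $\Spec k$; by (ii), $\overline{\{x_1\}}\cap\overline{\{x_2\}}$ has image $\Spec k\cap\Spec k\ne\emptyset$, hence is nonempty. Conversely, if the closures meet in $\cX_y$, then $x_1$ and $x_2$ have the same (closed) image. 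Reflexivity and symmetry of $\sim$ are clear, and transitivity follows from the resulting fact that each fiber has a unique closed point lying in the closure of every one of its points; hence $x_1\sim x_2$ exactly when $x_1,x_2$ lie in the same fiber, and $[\cX(k)]/{\sim}\to Y(k)$ is a bijection.

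For (iv)--(vi) I would appeal to the cited results of \cite{alper_good} and \cite{alper_adequate}, since these are not formal consequences of exactness and are where the real work lies. In (iv), uniqueness of $\xi$ is formal: $\phi$ is an epimorphism, being surjective with $\oh_Y\cong\phi_*\oh_{\cX}$. For existence one first treats $Z=\Spec B$ affine, where $\psi$ corresponds to $B\to\Gamma(\cX,\oh_{\cX})=\Gamma(Y,\oh_Y)$; for general $Z$ one covers by affine opens, observes that $\psi^{-1}$ of an open is saturated and descends to an open of $Y$ (using that $\phi$ is universally submersive and that its fibers are the equivalence classes of (iii)), and glues by uniqueness. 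The locally noetherian hypothesis is needed to control these topological manipulations and, in (v), for the assertions that $Y$ is locally noetherian and $\phi_*$ preserves coherence (reduced to $Y=\Spec A$ noetherian affine and a finite-generation argument for $\phi_*\cF$ with $\cF$ coherent), while (vi) follows from the finiteness theorem \cite[Thm.~6.3.3]{alper_adequate}. The main obstacle, accordingly, is that (iv)--(vi) genuinely require this finiteness and topological input, in contrast to (i)--(iii) and (vii), which follow from exactness of the pushforward alone.
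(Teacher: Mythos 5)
The paper offers no proof of this proposition---it is quoted directly from the cited references---so there is no in-text argument to compare yours against, but your proposal is correct and reconstructs exactly the arguments of \cite[Prop.~4.7, Thm.~4.16]{alper_good}: base change via the flat/closed-immersion factorization together with the projection formula $\phi_*\phi^*\cF\cong\cF$, closedness via surjectivity of $\cZ\to\sSpec_Y\phi_*\oh_{\cZ}$, the image formula from exactness of $\phi_*$ preserving images, and the orbit-closure equivalence relation for (iii). Deferring (iv)--(vi) to the finiteness theorems of \cite{alper_good} and \cite{alper_adequate} is precisely what the paper itself does, so the treatment is consistent with the source.
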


\begin{remark}  The property of being a good moduli space also descends under faithfully flat morphisms.  See \cite[Chapter 4]{alper_good} for further properties and a systematic development of the theory.  We emphasize that other than (iv) and (vi), the proofs of the above properties are quite elementary.
\end{remark}

\subsection*{Relationship between good quotients and good moduli spaces}
We begin with:

\begin{lem}
Let $G \to S$ be an affine, linearly reductive group scheme acting on an algebraic space $X$, and let $X \to Y$ be a $G$-invariant morphism.  Then the corresponding morphism $[X/G] \to Y$ is cohomologically affine if and only if $X \to Y$ is affine.
\end{lem}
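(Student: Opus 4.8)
The plan is to reduce everything to the properties of cohomologically affine morphisms listed in the earlier remark, using the quotient presentation $X \to [X/G]$. First I would recall that since $G \to S$ is affine and linearly reductive, the morphism $BG \to S$ is cohomologically affine, and in particular $BG$ has quasi-affine (indeed affine) diagonal; the same then holds for $[X/G]$ over $X$, so the projection $p\colon X \to [X/G]$ is a faithfully flat, affine, surjective morphism with $[X/G]$ having quasi-affine diagonal. The key diagram is the commutative triangle with $p\colon X \to [X/G]$, the structure morphism $g\colon [X/G] \to Y$, and $f = g \circ p\colon X \to Y$.

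For the ``if'' direction, suppose $f\colon X \to Y$ is affine. Since $p$ is itself cohomologically affine (it is affine, and $[X/G]$ has quasi-affine diagonal, so affine implies cohomologically affine by property (ii) of the remark), and $g \circ p = f$ is affine, hence cohomologically affine, I would use that cohomologically affine morphisms are ``local on the target under faithfully flat morphisms'' together with stability under composition: more precisely, I want to deduce that $g$ is cohomologically affine from the fact that $g \circ p$ is and $p$ is faithfully flat and cohomologically affine. The clean way is descent: for the pushforward $g_*$ along $[X/G] \to Y$, flat base change along $p$ (or rather, the faithful flatness of $p$) lets one check exactness after pulling back, and $p^* g_* = (\text{pushforward along the base change of } g) \circ (\text{pullback})$ is exact because that base change of $g$ is exactly $f$ followed by a projection — here I need to be a little careful and instead argue that $g_*$ is exact by testing on the faithfully flat cover, reducing to exactness of $f_*$. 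This uses property (iii), locality on the target under faithfully flat morphisms, applied to $g$ with the cover $X \to Y$... but $X \to Y$ need not be flat, so instead I apply it to the cover $[X/G]\times_Y X \to [X/G]$? The cleanest route is: $[X/G] \to Y$ is cohomologically affine iff it becomes so after the faithfully flat base change $X \to Y$ is replaced by working with $p$ directly — so I would phrase the ``if'' direction as: $g_* \cong$ (something computed via $p$), and exactness of $f_* = g_* p_*$ together with exactness and faithfulness of $p_*$ forces exactness of $g_*$.

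For the ``only if'' direction, suppose $g\colon [X/G] \to Y$ is cohomologically affine. Then $f = g \circ p$ is cohomologically affine, being a composite of cohomologically affine morphisms (property (iii)). Now $f\colon X \to Y$ is a representable — indeed schematic — morphism to an algebraic space, and algebraic spaces have quasi-affine (separated, even) diagonal, so by property (ii) a representable cohomologically affine morphism to a target with quasi-affine diagonal is affine. Hence $f$ is affine, as desired.

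The main obstacle I anticipate is the ``if'' direction: deducing cohomological affineness of $g$ from that of $f = g\circ p$ when $p$ is faithfully flat but $f$ itself is not flat over $Y$. The resolution is that one does not need flatness of $f$: faithful flatness of $p$ alone suffices for descent of the exactness of the pushforward, because $p_*$ is exact (as $p$ is affine) and faithful (as $p$ is faithfully flat and affine, so $p_* \oh_X$ is faithfully flat over $\oh_{[X/G]}$), and a sequence of quasi-coherent sheaves on $Y$ is exact iff its image under $f_* = g_* \circ p_*$ is — wait, that requires knowing $g_*$ reflects exactness, which is what we want. So the correct formulation is: given a short exact sequence $0 \to \cF' \to \cF \to \cF'' \to 0$ on $[X/G]$, apply $p^*$ (exact, $p$ flat), get a short exact sequence on $X$, apply $f_* = g_* p_*$ (exact by hypothesis on $f$), so $g_*(p_* p^* \cF') \to \dots$ is exact; then use that $p_* p^* \cF \cong \cF \otimes p_*\oh_X$ and that $p_*\oh_X$ is a faithfully flat $\oh_{[X/G]}$-algebra to recover exactness of $g_*\cF' \to g_*\cF \to g_*\cF'' \to 0$ — this is where I'd cite that cohomological affineness is local on the target for faithfully flat morphisms, applied with the faithfully flat cover $g \circ p$ factoring appropriately, or simply spell out this one-line descent argument directly.
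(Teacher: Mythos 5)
Your ``only if'' direction is correct and is exactly the paper's argument: $X \to [X/G]$ is affine since $G \to S$ is affine, so $X \to Y$ is a composition of cohomologically affine morphisms, and a representable cohomologically affine morphism to an algebraic space (which has quasi-affine diagonal) is affine by Serre's criterion.

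The ``if'' direction, however, has a genuine gap. You are trying to descend cohomological affineness of $g\colon [X/G] \to Y$ from that of $f = g \circ p$ along the faithfully flat affine surjection $p\colon X \to [X/G]$, i.e.\ along a cover of the \emph{source}. This implication is false in general: take $G$ affine but \emph{not} linearly reductive over a field $k$ (e.g.\ $G = \GG_a$ in characteristic $0$), with $X = Y = \Spec k$ and the trivial action. Then $p\colon \Spec k \to BG$ is faithfully flat, affine and surjective, and $g \circ p = \mathrm{id}$ is affine, yet $g\colon BG \to \Spec k$ is not cohomologically affine. Concretely, the step that fails in your sketch is the last one: from exactness of $g_*$ applied to the original sequence tensored with the faithfully flat algebra $\cA = p_*\oh_X$, you cannot recover exactness of $g_*$ on the original sequence --- in the example, $V \mapsto (V \otimes_k \Gamma(G,\oh_G))^G \cong V$ is exact while $V \mapsto V^G$ is not, and there is no way to ``untensor.'' A telling symptom is that your descent argument never actually uses linear reductivity of $G$, whereas the counterexample shows the statement fails without it. The paper's proof instead factors $[X/G] \to Y$ as the affine morphism $[X/G] \to [Y/G]$ (affine because its base change along the smooth cover $Y \to [Y/G]$ is $X \to Y$) followed by $[Y/G] \to Y$, which is cohomologically affine as the base change of $BG \to S$; this is precisely where linear reductivity enters.
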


\begin{proof}
Suppose $[X/G] \to Y$ is cohomologically affine. Since $X \to [X/G]$ is affine (as $G \to S$ is affine), the composition $X \to [X/G] \to Y$ is cohomologically affine, and therefore affine.  Conversely, if $X \to Y$ is affine, consider the trivial action of $G$ on $Y$.  Since $G \to S$ is linearly reductive, $BG \to S$ is cohomologically affine.  Since $\phi$ is the composition of the affine morphism $[X/G] \to [Y/G]$ and the cohomologically affine morphism $[Y/G] \to Y$ (the base change of $BG \to S$ by $Y \to S$), $\phi$ is cohomologically affine.
\end{proof}

\begin{prop} \label{proposition-good-good}
Let $G \to S$ be an affine, linearly reductive group scheme acting on an algebraic space $X$.  Let $\pi: X \to Y$ be a $G$-invariant morphism.   Then $\phi: [X/G] \to Y$ is a good moduli space if and only if:
\begin{enumeratei}
\item $\pi$ is affine; and
\item $\oh_Y \to  \pi_*(\oh_X)^G$ is an isomorphism.
\end{enumeratei}
\end{prop}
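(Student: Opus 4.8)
The plan is to match the two conditions of Definition \ref{definition-good} against conditions (i) and (ii) one at a time. The first condition — that $\phi$ be cohomologically affine — is, by the Lemma immediately preceding this proposition, equivalent to $\pi$ being affine, i.e.\ to (i). Moreover, if $\pi$ is affine then it is quasi-compact, and since $X \to [X/G]$ is a quasi-compact faithfully flat cover, $\phi$ is quasi-compact, so the quasi-compactness built into Definition \ref{definition-good} is automatic (and conversely it is part of the hypothesis when $\phi$ is assumed to be a good moduli space). Thus everything reduces to showing that, granted (i), the remaining condition $\oh_Y \to \phi_* \oh_{[X/G]}$ being an isomorphism is equivalent to (ii).

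To carry out this reduction I would factor $\phi$ as
\[
 [X/G] \xrightarrow{\ g\ } [Y/G] \xrightarrow{\ q\ } Y,
\]
where $[Y/G] := Y \times_S BG$ is formed using the trivial $G$-action on $Y$, $q$ is the projection, and $g$ is induced by $\pi$. Base changing $g$ along the faithfully flat cover $Y \to [Y/G]$ recovers $\pi : X \to Y$, so under the equivalence between $\text{QCoh}([Y/G])$ and quasi-coherent $\oh_Y[G]$-modules, flat base change identifies $g_* \oh_{[X/G]}$ with the $\oh_Y[G]$-algebra $\pi_* \oh_X$ carrying its canonical $G$-action (here $g$ is quasi-compact and quasi-separated since $\pi$ is). Since $G \to S$ is linearly reductive, $BG \to S$ — hence its base change $q$ along $Y \to S$ — is cohomologically affine, and $q_* \cM = \cM^G$ for every $\oh_Y[G]$-module $\cM$; in particular the unit $\oh_Y \to q_* \oh_{[Y/G]}$ is an isomorphism. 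Applying $q_*$ to the algebra map $\oh_{[Y/G]} \to g_* \oh_{[X/G]}$ then identifies $\oh_Y \to \phi_* \oh_{[X/G]} = q_* g_* \oh_{[X/G]}$ with the map $\oh_Y \to \pi_*(\oh_X)^G$ of (ii), giving the desired equivalence.

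I expect the only genuine work to be the bookkeeping in the second paragraph: verifying that the square with rows $\pi$ and $g$ and columns the two torsor projections is cartesian, and that faithfully flat descent along $Y \to [Y/G]$ identifies $g_* \oh_{[X/G]}$ with $\pi_* \oh_X$ \emph{as an $\oh_Y[G]$-algebra, compatibly with the unit maps from the structure sheaves}, so that the natural map $\oh_Y \to \phi_* \oh_{[X/G]}$ is literally the map appearing in (ii). The remaining ingredients — the preceding Lemma, the characterization of linear reductivity by cohomological affineness of $BG \to S$, stability of cohomological affineness under base change along morphisms to targets with quasi-affine diagonal, and the identity $q_* = (-)^G$ — are all recorded in the discussion above.
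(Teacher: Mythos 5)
Your proof is correct and follows essentially the same route as the paper: the paper's own proof simply invokes the preceding lemma for condition (i) and asserts the canonical isomorphism $\phi_* \oh_{[X/G]} \cong \pi_*(\oh_X)^G$ for condition (ii), whereas you additionally spell out how that isomorphism arises from the factorization $[X/G] \to [Y/G] \to Y$ and the identity $q_* = (-)^G$. No gaps.
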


\begin{proof}  Since there is a canonical isomorphism $\phi_* \oh_{[X/G]}\cong \pi_*(\oh_X)^G$, condition (ii) above is equivalent to condition (ii) of Definition \ref{definition-good}.  The previous lemma shows that condition (i) is equivalent to condition (i) of Definition \ref{definition-good}.
\end{proof}

In particular, we can reinterpret the definition of a good quotient in the case of an action by an affine, linearly reductive group scheme.

\begin{cor}\label{corollary-good-good}  If $G \to S$ is an affine, linearly reductive group scheme acting on an algebraic space $X$, then $\pi: X \to Y$ is a good quotient if and only if:
\begin{enumeratei}
\item $\pi$ is an affine $G$-invariant morphism; and
\item $\oh_Y \to \pi_* (\oh_X)^G$ is an isomorphism.
\end{enumeratei}
\end{cor}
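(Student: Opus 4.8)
The plan is to deduce the corollary from Proposition \ref{proposition-good-good} together with the basic properties of good moduli spaces collected in Proposition \ref{properties-good}. The forward implication is immediate from the definitions: if $\pi \colon X \to Y$ is a good quotient then it is in particular affine and $G$-invariant with $\oh_Y \to \pi_*(\oh_X)^G$ an isomorphism, which is exactly (i) and (ii).

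For the converse, assume (i) and (ii). By Proposition \ref{proposition-good-good}, the induced morphism $\phi \colon [X/G] \to Y$ is a good moduli space, and I will read off the three conditions in the definition of a good quotient from this. Condition (ii) is hypothesis (ii) verbatim (using $\phi_*\oh_{[X/G]} \cong \pi_*(\oh_X)^G$). For condition (i), $\pi$ is affine and $G$-invariant by hypothesis, and it is surjective because $X \to [X/G]$ is surjective (it is an fppf presentation) and $[X/G] \to Y$ is surjective by Proposition \ref{properties-good}(i); hence the composite $\pi$ is surjective.

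It remains to verify condition (iii). Fix a morphism $Y' \to Y$ and set $X' = X \times_Y Y'$, so that $[X'/G] \cong [X/G] \times_Y Y'$ and, by Proposition \ref{properties-good}(vii), $\phi' \colon [X'/G] \to Y'$ is again a good moduli space. Closed $G$-invariant subspaces of $X'$ correspond bijectively to closed substacks of $[X'/G]$, and since $X' \to [X'/G]$ is surjective, the image in $Y'$ of such a subspace agrees with the image of the corresponding substack. Condition (iii)(a) then follows by applying universal closedness of $\phi'$ (Proposition \ref{properties-good}(i)) to the closed substack $\cZ' \subseteq [X'/G]$ cut out by a closed $G$-invariant $Z' \subseteq X'$. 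For (iii)(b), if $Z'_1, Z'_2 \subseteq X'$ are disjoint closed $G$-invariant subspaces, the associated closed substacks $\cZ'_1, \cZ'_2$ satisfy $\cZ'_1 \cap \cZ'_2 = \emptyset$, so by Proposition \ref{properties-good}(ii) we get $\im \cZ'_1 \cap \im \cZ'_2 = \im(\cZ'_1 \cap \cZ'_2) = \emptyset$, and the same holds for the images of $Z'_1, Z'_2$ in $Y'$.

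The only points requiring a modicum of care — and where I would spend a sentence each — are the identification of closed $G$-invariant subspaces of $X'$ with closed substacks of $[X'/G]$ (together with the matching of their images in $Y'$), and the observation that the scheme-theoretic versus set-theoretic image distinction in Proposition \ref{properties-good}(ii) is harmless here, since an empty scheme-theoretic intersection has empty underlying set. I do not anticipate a genuine obstacle; the content is entirely in Propositions \ref{proposition-good-good} and \ref{properties-good}.
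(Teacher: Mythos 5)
Your proposal is correct and follows exactly the route the paper intends: the paper's proof is the one-line citation of Propositions \ref{properties-good} and \ref{proposition-good-good}, and you have simply spelled out how conditions (iii)(a) and (iii)(b) of the good-quotient definition are extracted from universal closedness, the image-of-intersections property, and stability under base change of good moduli spaces. The details you flag as needing a sentence each (closed $G$-invariant subspaces versus closed substacks, scheme-theoretic versus set-theoretic images) are handled correctly.
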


\begin{proof} This follows from Propositions \ref{properties-good} and \ref{proposition-good-good}.
\end{proof}
\noindent Of course, one can show the above corollary directly without recourse to the theory of stacks and good moduli spaces.  However, we note that Proposition \ref{properties-good}(iv) implies the uniqueness of good quotients in the category of algebraic spaces.  We know of no direct proof of this result, so the language of stacks and good moduli spaces seems quite advantageous in this case.

\begin{cor}  \label{corollary-universal}
Let $G \to S$ be an affine, linearly reductive group scheme acting on a noetherian algebraic space $X$.  Suppose $\pi: X \to Y$ is a good quotient.  Then for any $G$-invariant morphism $\varphi: X \to Z$ to an algebraic space $Z$, there is a unique morphism $\Psi: Y \to Z$ such that $\varphi = \Psi \circ \pi$. \epf
\end{cor}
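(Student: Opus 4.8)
The plan is to transport the statement to the quotient stack and appeal to the universal property of good moduli spaces. Let $p \colon X \to [X/G]$ denote the projection. By the dictionary of Section \ref{section-dictionary}, giving a $G$-invariant morphism from $X$ to an algebraic space $Z$ is the same as giving a morphism $[X/G] \to Z$; concretely, $\varphi$ corresponds to a morphism $\bar\varphi \colon [X/G] \to Z$ with $\varphi = \bar\varphi \circ p$, and the good quotient $\pi$ corresponds to a morphism $\phi \colon [X/G] \to Y$ with $\pi = \phi \circ p$.

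First I would observe that $\phi$ is a good moduli space. This is exactly Proposition \ref{proposition-good-good} (equivalently Corollary \ref{corollary-good-good}): $G \to S$ is affine and linearly reductive, and $\pi$ is a good quotient, so conditions (i) and (ii) there hold for $\pi$, whence $\phi$ satisfies the two conditions of Definition \ref{definition-good}. Next, since $X$ is noetherian and $p$ is faithfully flat and of finite presentation (it is a $G$-torsor, and $G \to S$ is flat, finitely presented and affine), the stack $[X/G]$ is noetherian, in particular locally noetherian; hence Proposition \ref{properties-good}(iv) applies to $\phi$.

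Now apply Proposition \ref{properties-good}(iv) to $\bar\varphi$: there is a unique morphism $\Psi \colon Y \to Z$ with $\Psi \circ \phi = \bar\varphi$. Composing with $p$ gives $\Psi \circ \pi = \Psi \circ \phi \circ p = \bar\varphi \circ p = \varphi$, which settles existence. For uniqueness, if $\Psi' \colon Y \to Z$ also satisfies $\Psi' \circ \pi = \varphi$, then $(\Psi' \circ \phi)\circ p = \varphi = \bar\varphi \circ p$; since $p$ is an fppf cover it is an epimorphism, so $\Psi' \circ \phi = \bar\varphi$, and then the uniqueness clause of Proposition \ref{properties-good}(iv) forces $\Psi' = \Psi$.

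The crux of the argument is entirely absorbed into Proposition \ref{properties-good}(iv), which is the genuinely nontrivial input and is exactly where the noetherian hypothesis on $X$ is needed; the rest is bookkeeping. The only points requiring a moment's care are the translation between ``$G$-invariant morphism out of $X$'' and ``morphism out of $[X/G]$'' (immediate from the construction of the quotient stack, since $Z$ carries the trivial $G$-action), and the verification that the reformulation of ``good quotient'' in Corollary \ref{corollary-good-good} matches the definition of a good moduli space --- neither of which poses any real obstacle.
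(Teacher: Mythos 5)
Your proof is correct and is precisely the argument the paper intends (the corollary is stated with no written proof because it is immediate from Proposition \ref{proposition-good-good} together with Proposition \ref{properties-good}(iv), exactly as you have laid out). The bookkeeping you supply --- that $[X/G]$ is noetherian since $X$ is, and that $p$ being an fppf epimorphism gives uniqueness --- is the right way to fill in the details.
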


We can also immediately deduce:
\begin{prop}  \label{proposition-generalization-B}
Suppose an algebraic stack $\cx$ admits a cohomologically affine morphism $f: \cX \to Z$ to an algebraic space $Z$.  Then there exists a good moduli space $\phi: \cX \to Y$.
\end{prop}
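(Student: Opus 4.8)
The plan is to reduce to the affine situation via the Rees/adjunction philosophy: the cohomologically affine map $f \colon \cX \to Z$ already endows $\cX$ with "enough functions" relative to $Z$, and the good moduli space should be the relative spectrum over $Z$ of the sheaf of invariants $f_* \oh_{\cX}$. Concretely, I would set $\cA := f_* \oh_{\cX}$, a quasi-coherent sheaf of $\oh_Z$-algebras, and define $Y := \operatorname{\mathcal{S}pec}_Z \cA$ with its structure map $g \colon Y \to Z$. Since $g$ is affine, $Y$ is an algebraic space. The map $f$ factors canonically through $Y$: the unit $\oh_Z \to \cA$ corresponds to $g$, and there is a tautological $\oh_Z$-algebra map $\cA \to f_* \oh_{\cX}$ (the identity) that, by the universal property of the relative Spec, produces $\phi \colon \cX \to Y$ with $g \circ \phi = f$. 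It then remains to check that $\phi$ is a good moduli space, i.e. the two conditions of Definition \ref{definition-good}.

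For condition (ii), $\oh_Y \to \phi_* \oh_{\cX}$: by construction $g_* \oh_Y = \cA = f_* \oh_{\cX} = g_*(\phi_* \oh_{\cX})$, and one checks the map $\oh_Y \to \phi_* \oh_{\cX}$ is the one inducing this identity after applying the affine morphism $g_*$; since $g$ is affine, $g_*$ is faithful and conservative on quasi-coherent sheaves, so $\oh_Y \to \phi_* \oh_{\cX}$ is an isomorphism. For condition (i), that $\phi$ is cohomologically affine: we know $f = g \circ \phi$ is cohomologically affine and $g$ is affine, but cohomological affineness does not in general descend through the right factor of a composition. The standard trick is to note that $\phi$ is quasi-compact (as $f$ is and $g$ is separated), and then use that $g$ is affine to compute $\phi_*$: for a quasi-coherent $\oh_{\cX}$-module $\cF$, $g_* \phi_* \cF = f_* \cF$, which is exact in $\cF$ since $f$ is cohomologically affine; as $g_*$ is exact and conservative (affine morphisms), exactness of $g_* \phi_* = (g_* ) \circ (\phi_*)$ together with exactness and conservativity of $g_*$ forces $\phi_*$ to be exact. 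This is the crux of the argument.

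I expect the main obstacle to be precisely this last descent step — transferring cohomological affineness from the composition $g \circ \phi$ to $\phi$. The argument sketched above works cleanly because $g$ is \emph{affine} (not merely cohomologically affine), so $g_*$ is not just exact but also faithful/conservative on quasi-coherent modules, which lets us cancel it. One should also double-check the quasi-compactness hypothesis built into the definition of cohomologically affine and good moduli space: $f$ is quasi-compact by assumption and $g$ is affine hence separated, so $\phi$ is quasi-compact. A secondary point to verify is that $\cA = f_* \oh_{\cX}$ is genuinely quasi-coherent as a sheaf of algebras so that $\operatorname{\mathcal{S}pec}_Z$ makes sense — this is immediate since $f_*$ preserves quasi-coherence (it is a pushforward along a quasi-compact, quasi-separated morphism of algebraic stacks, and cohomological affineness is not even needed for this). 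Everything else — the factorization, the identification of pushforwards — is formal manipulation with adjunctions and the universal property of relative Spec.
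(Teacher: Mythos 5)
Your construction is identical to the paper's ($Y=\sSpec_Z f_*\oh_{\cX}$, with $\phi$ induced by the universal property of relative $\sSpec$, and condition (ii) holding essentially by construction), but you verify the one nontrivial point --- cohomological affineness of $\phi$ --- by a different and equally valid argument. The paper factors $\phi$ through the graph: $\cX \xrightarrow{(id,f)} \cX\times_Z Y \xrightarrow{p_2} Y$, observes that the first map is affine (a base change of the diagonal of the affine morphism $Y\to Z$), that the second is cohomologically affine (a base change of $f$, using that $Z$, being an algebraic space, has quasi-affine diagonal), and concludes by stability of cohomological affineness under composition. You instead cancel $g_*$ from the identity $g_*\circ\phi_*=f_*$: since $f_*$ is exact and $g_*$ is exact and conservative on quasi-coherent sheaves (as $g$ is affine), any failure of exactness of $\phi_*$ would be detected after applying $g_*$, so $\phi_*$ is exact; this is correct, and your handling of quasi-compactness of $\phi$ (from quasi-compactness of $f$ and quasi-separatedness of the affine $g$) is also fine. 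The trade-off: the paper's route leans on the base-change stability of cohomologically affine morphisms when the target has quasi-affine diagonal --- one of the more delicate properties in the theory --- whereas your cancellation argument is more elementary and self-contained, using only that pushforward along an affine morphism is exact and faithful. Both are complete proofs.
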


\begin{proof}
Let $Y = \sSpec_Z f_* \oh_{\cX}$ and $\phi: \cX \to Y$ be the canonical morphism (hence $\oh_Y \to \phi_* \oh_{\cX}$ is trivially an isomorphism).  Consider the 2-cartesian diagram, in which the top composition is $\phi$:
$$\xymatrix{
				& \cX \ar[r]^{(id,f)} \ar[dl]	& \cX \times_{Z} Y \ar[r]^{p_2}	 \ar[dl] \ar[dr] & Y \ar[dr] \\
Y \ar[r]^{\Delta}	& Y \times_{Z} Y	&	&  \cX \ar[r]				& Z
}.$$
Since $Y \to Z$ is affine, so is $Y \to Y \times_Z Y$ and hence $\cX \to \cX \times_Z Y$.  Since $\cX \to Z$ is cohomologically affine and $Z$ has quasi-affine diagonal (as it is an algebraic space), $\cX \times_Z Y \to Y$ is cohomologically affine.  Therefore, $\phi$ is cohomologically affine.
\end{proof}

We can now prove Theorem \ref{theorem-B}:

\noindent {\bf Theorem 1.1.} {\em
Let $G \to S$ be an affine, linearly reductive group scheme acting on an algebraic space $X$, and suppose $X$ admits a $G$-invariant affine morphism $f: X \to Z$ to an algebraic space.  Then there exists a good quotient $\pi: X \to Y$ with $Y$ an algebraic space.}

\begin{proof} If $Y = \sSpec_Z f_* (\oh_X)^G$, then the induced morphism $\pi: X \to Y$ is a good quotient by Proposition \ref{proposition-generalization-B}, Proposition \ref{proposition-good-good} and Corollary \ref{corollary-good-good}.
\end{proof}

\section{Affine cosets}\label{section-affine-cosets}

We are now in the position to recover Matsushima's theorem:

\noindent {\bf Theorem 1.2.} {\em
Suppose $G \to S$ is an affine, linearly reductive group scheme and $H \subseteq G$ is a flat, finitely presented, separated subgroup scheme.  Then the following are equivalent:
\begin{enumeratei}
\item $H \to S$ is linearly reductive;
\item $G/H \to S$ is affine;
\item the functor $\cF \mapsto \text{Ind}_H^G \cF$ from $\text{QCoh}^H(S)$ to $\text{QCoh}^G(S)$ is exact.
\end{enumeratei}
}

\begin{proof}
First note that, since $G \to S$ is linearly reductive, it follows that $BG \to S$ is cohomologically affine. Suppose $H\to S$ is linearly reductive, and so $BH\to S$ is cohomologically affine.  Since $G$ is affine, $G/H \to BH$ is also affine, and hence the composition $G/H \to BH \to S$ is cohomologically affine. By Serre's criterion, $G/H\to S$ is therefore affine.  Conversely, suppose $G/H\to S$ is affine.  Consider the 2-cartesian square
\[
\xymatrix{
G/H \ar[r] \ar[d]		& S \ar[d]\\
BH \ar[r]			& BG.
}
\]
By descent, the morphism $BH \to BG$ is affine.  Therefore the composition $BH \to BG \to S$ is cohomologically affine, and so $H \to S$ is linearly reductive.  This proves the equivalence of (i) and (ii).

Condition (iii) is a direct translation of the condition of affineness for the morphism $BH \to BG$, and is thus equivalent by descent and Serre's criterion to (ii).
\end{proof}

The following result was used by Bia\l ynicki-Birula (\cite{bb_homogeneous}) and Richardson (\cite{richardson} to prove Matsushima's theorem.  A proof also appeared in \cite[p. 85]{luna}.  
The language of stacks provides a quick proof relying essentially only on descent for affine morphism.

\begin{prop}  ({\rm cf.} \cite[Lemma 1]{bb_homogeneous}) \label{proposition-cosets}
Let $G \to S$ be a flat, finitely presented, separated group scheme, and $H_2 \subseteq H_1$ be an inclusion of flat, finitely presented, separated subgroup schemes of $G$, with $H_1$ quasi-affine over $S$. Then $H_1/H_2 \to S$ is affine if and only if $G/H_2 \to G/H_1$ is affine.
\end{prop}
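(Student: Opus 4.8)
The plan is to translate the statement entirely into a statement about the classifying stacks $BH_2, BH_1, BG$ and their morphisms, and then to invoke faithfully flat descent for affine morphisms together with the basic cartesian-square dictionary established in Section \ref{section-dictionary}. The key point is that $G/H_2 \to S$, $H_1/H_2 \to S$, and the various quotient maps all fit into cartesian diagrams of classifying stacks, so that affineness of a quotient morphism is equivalent to affineness of the corresponding representable morphism of stacks, which can then be checked after faithfully flat base change.

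Concretely, first I would record the two cartesian squares. The inclusion $H_2 \subseteq H_1 \subseteq G$ induces morphisms $BH_2 \to BH_1 \to BG$, and (as in Section \ref{section-dictionary}) the square with corners $H_1/H_2$, $S$, $BH_2$, $BH_1$ is cartesian, as is the square with corners $G/H_2$, $G/H_1$, $BH_2$, $BH_1$ — here one uses that pulling back $BH_2 \to BH_1$ along $S \to BH_1$ gives $H_1/H_2 \to S$, while pulling back along $G/H_1 \to BH_1$ (which is the map classifying the $H_1$-torsor $G \to G/H_1$) gives $G/H_2 \to G/H_1$. Since $H_1 \to S$ is quasi-affine, $S \to BH_1$ is quasi-affine, hence $BH_1$ has quasi-affine diagonal; in particular $G/H_1 \to BH_1$ is quasi-affine (being a base change of $S \to BH_1$), and importantly $G/H_1 \to S$ is quasi-affine so that $G/H_1$ is an algebraic space (not just a stack). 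Now the equivalence: $H_1/H_2 \to S$ is affine $\iff$ $BH_2 \to BH_1$ is affine (by descent along the fppf cover $S \to BH_1$), and $BH_2 \to BH_1$ is affine $\iff$ $G/H_2 \to G/H_1$ is affine (by descent along the fppf cover $G/H_1 \to BH_1$, which is faithfully flat because $S \to BH_1$ is and flatness/surjectivity is preserved under base change). Chaining these two equivalences gives the claim.

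The main obstacle I anticipate is the backward direction's descent step: to conclude $BH_2 \to BH_1$ is affine from affineness of its base change $G/H_2 \to G/H_1$, one needs $G/H_1 \to BH_1$ to be an fppf (or at least fpqc) cover, and one needs both source and target of the morphism being descended to be honest algebraic spaces/schemes rather than arbitrary stacks — otherwise "affine morphism" requires care. This is exactly where the hypothesis that $H_1$ is quasi-affine over $S$ is used: it guarantees $G/H_1$ is an algebraic space, so that $G/H_2 \to G/H_1$ is a morphism to an algebraic space and affineness descends in the usual sense. I would also need to check that the representable morphism $BH_2 \to BH_1$ has the property that "affine" and "cohomologically affine relative to $BH_1$" do not get conflated — but since $BH_1$ has quasi-affine diagonal, Serre's criterion (as invoked repeatedly earlier in the paper) lets one pass freely between the two notions if needed, so this is not a real difficulty.

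One cautionary remark worth inserting: the forward direction ($H_1/H_2 \to S$ affine $\Rightarrow$ $G/H_2 \to G/H_1$ affine) is formal once one knows $BH_2 \to BH_1$ is affine, since affine morphisms are stable under arbitrary base change and $G/H_2 \to G/H_1$ is such a base change; so that direction does not even require the quasi-affineness hypothesis on $H_1$. The hypothesis is genuinely only needed to make the converse descent argument legitimate, and it would be natural to flag this asymmetry in the write-up.
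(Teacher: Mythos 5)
Your proof is correct and takes essentially the same route as the paper's: both reduce the statement to the affineness of $BH_2 \to BH_1$ via the same two cartesian squares and fppf descent along the covers $S \to BH_1$ and $G/H_1 \to BH_1$. (A minor quibble: $G/H_1$ is an algebraic space by Artin's theorem on quotients by flat equivalence relations regardless of whether $H_1$ is quasi-affine over $S$, so your stated explanation of where that hypothesis enters is not quite right, but this does not affect the argument.)
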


\begin{proof}
The 2-cartesian diagrams
$$\begin{aligned}
\xymatrix{
H_1/H_2 \ar[r] \ar[d]		& S \ar[d] \\
BH_2 \ar[r]			& BH_1
}	& \qquad
\xymatrix{
G/H_2 \ar[r] \ar[d]	& G/H_1 \ar[r] \ar[d]		& S \ar[d] \\
BH_2 \ar[r]		& BH_1 \ar[r]			& BG
}
\end{aligned}$$
and descent theory immediately imply the result.
\end{proof}

\begin{cor} ({\rm cf.} \cite[Corollary 1]{bb_homogeneous})  Let $H_2 \subseteq H_1 \subseteq G$ be inclusions of group schemes as in Proposition \ref{proposition-cosets}.  If $H_1/H_2$ and $G/H_1$ are affine over $S$, then so is $G/H_2$. \epf
\end{cor}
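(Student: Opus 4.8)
The corollary follows by combining the previous two results, so the plan is essentially to chain them together. Starting from the hypotheses that $H_1/H_2 \to S$ and $G/H_1 \to S$ are affine, I would first apply Proposition~\ref{proposition-cosets} (with the given inclusion $H_2 \subseteq H_1$ inside $G$, noting $H_1$ is quasi-affine over $S$ by assumption) to conclude that $G/H_2 \to G/H_1$ is affine. Then I would observe that $G/H_2 \to S$ factors as the composition
$$G/H_2 \longrightarrow G/H_1 \longrightarrow S,$$
in which the first map is now known to be affine and the second is affine by hypothesis. Since affine morphisms are stable under composition, $G/H_2 \to S$ is affine.

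The only point requiring a moment's care is verifying the hypotheses of Proposition~\ref{proposition-cosets} are genuinely in force: we need $H_1$ quasi-affine over $S$, which is exactly part of the standing assumption ``as in Proposition~\ref{proposition-cosets}'' imported into the statement of the corollary, and we need $H_2 \subseteq H_1$ to be an inclusion of flat, finitely presented, separated subgroup schemes of $G$, which is again part of that standing hypothesis. So there is nothing new to check. I would also remark that the factorization $G/H_2 \to G/H_1 \to S$ is simply the one induced functorially by the inclusions $H_2 \subseteq H_1 \subseteq G$ — concretely, it is visible in the right-hand 2-cartesian diagram of the proof of Proposition~\ref{proposition-cosets}, where $G/H_2 \to G/H_1$ is the base change along $BH_1 \to BG$ of $G/H_1 \to S$ composed appropriately.

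There is no real obstacle here; the content is entirely in Proposition~\ref{proposition-cosets}. If anything, the only thing to get right stylistically is to make the logical flow crisp: (1) hypothesis gives $G/H_1 \to S$ affine; (2) hypothesis $H_1/H_2 \to S$ affine plus Proposition~\ref{proposition-cosets} gives $G/H_2 \to G/H_1$ affine; (3) composition of affines is affine, so $G/H_2 \to S$ is affine. This is short enough that it is reasonable to simply state it in one or two sentences, which is presumably why the authors mark it with \epf\ and omit the proof. I would write:

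\begin{proof}
By Proposition~\ref{proposition-cosets}, the affineness of $H_1/H_2 \to S$ implies that $G/H_2 \to G/H_1$ is affine. Composing with the affine morphism $G/H_1 \to S$ shows that $G/H_2 \to S$ is affine.
\end{proof}
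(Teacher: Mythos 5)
Your proof is correct and is exactly the intended argument: the paper leaves the corollary without proof precisely because it is the immediate combination of Proposition~\ref{proposition-cosets} (giving that $G/H_2 \to G/H_1$ is affine) with the stability of affine morphisms under composition. Nothing further is needed.
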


\section{Observable subgroups}\label{section-observable}

For a subgroup $H \subseteq G$, Matsushima's theorem provides a relationship between the reductiveness of $H$ and the affineness of $G/H$ (see Theorem \ref{theorem-matsushima}).  In this section, we analyze the relationship between the observability of $H$ (i.e., every $H$-representation is a subrepresentation of a $G$-representation) and the quasi-affineness of $G/H$.  Our approach is in the same sprit as the proof of Theorem \ref{theorem-matsushima}; we interpret the quasi-affineness of $G/H$ in terms of functorial properties of $BH \to BG$. We first prove the following characterization of quasi-affine morphisms generalizing \cite[II.5.1.2 and IV.5.1.2]{ega}:

\begin{prop}  \label{proposition-quasi-affine}
Let $f: \cX \to \cY$ be a quasi-compact (and quasi-separated) morphism of algebraic stacks, with $\cY$ having quasi-affine diagonal.  Suppose $f$ is either strongly representable, or finite presentation and representable.  Then the following are equivalent:
\begin{enumeratei}
\item $f$ is quasi-affine;
\item for any quasi-coherent $\oh_{\cX}$-module $\cF$, the adjunction morphism $\phi^* \phi_* \cF \to \cF$ is surjective; and
\item for any quasi-coherent $\oh_{\cX}$-module $\cF$, there exists a quasi-coherent $\oh_{\cY}$-module $\cG$ and a surjection $f^* \cG \to \cF$;
\end{enumeratei}
If, in addition, $\cX$ and $\cY$ are locally noetherian, then the above conditions are also equivalent to:
\begin{enumeratei} \setcounter{enumi}{3}
\item for any coherent $\oh_{\cX}$-module $\cF$, the adjunction morphism $\phi^* \phi_* \cF \to \cF$ is surjective; and
\item for any coherent $\oh_{\cX}$-module $\cF$, there exists a coherent $\oh_{\cY}$-module $\cG$ and a surjection $f^* \cG \to \cF$.
\end{enumeratei}
\end{prop}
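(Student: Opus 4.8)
The plan is to dispose of the essentially formal implications, then reduce the substantive equivalence (i) $\Leftrightarrow$ (ii) to the case of an affine base by smooth-locality on the target, and run the classical argument of \cite[II.5.1.2]{ega} there. The equivalences (ii) $\Leftrightarrow$ (iii) and (iv) $\Leftrightarrow$ (v) are formal: (ii) gives (iii) with $\cG = f_* \cF$, and conversely any surjection $f^* \cG \to \cF$ is adjoint to a map $\cG \to f_* \cF$ through which it factors as $f^* \cG \to f^* f_* \cF \to \cF$, forcing the counit $f^* f_* \cF \to \cF$ to be surjective; the same argument gives (v) $\Rightarrow$ (iv), while for (iv) $\Rightarrow$ (v) one takes $\cG$ to be a suitably large coherent subsheaf of $f_* \cF$, which suffices by a standard argument using that $\cX$ is locally noetherian and $f$ is quasi-compact. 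The remaining noetherian assertions then follow from (i) $\Leftrightarrow$ (ii): indeed (i) $\Rightarrow$ (ii) $\Rightarrow$ (iv) trivially, and (iv) $\Rightarrow$ (ii) because $f_*$ commutes with the filtered colimit expressing any quasi-coherent sheaf as the union of its coherent subsheaves. So everything comes down to (i) $\Leftrightarrow$ (ii).

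Next I would reduce (i) $\Leftrightarrow$ (ii) to the case in which $\cY$ is an affine scheme. Quasi-affineness of a representable morphism is fppf-local on the target, and condition (ii) is stable under smooth base change of $\cY$: for the direction we need, given a sheaf on the base change one pushes it forward to $\cX$, applies (ii) there, and pulls back, using that the counit $p^* p_* \to \mathrm{id}$ is surjective for any faithfully flat $p$. Choosing a smooth chart $\Spec A \to \cY$, we may thus assume $\cY = \Spec A$; then $\cX$ is a quasi-compact, quasi-separated scheme (when $f$ is strongly representable) or algebraic space (when $f$ is representable of finite presentation), and over this affine base condition (ii) says precisely that every quasi-coherent $\oh_{\cX}$-module is globally generated, since $f_* \cF$ is then a quotient of a free $\oh_{\Spec A}$-module and hence so is $f^* f_* \cF$.

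Over the affine base, (i) $\Rightarrow$ (ii) is easy: a quasi-affine $f$ factors as a quasi-compact open immersion $j \colon \cX \hookrightarrow W := \sSpec_{\cY} f_* \oh_{\cX}$ followed by an affine morphism $g \colon W \to \cY$ (the relative Spec is well-behaved because $\cY$ has quasi-affine, hence separated, diagonal); the counit of the affine morphism $g$ on $j_* \cF$ is surjective, and pulling it back along the exact functor $j^*$, together with $j^* j_* \cong \mathrm{id}$, recovers the counit $f^* f_* \cF \to \cF$. The reverse implication (ii) $\Rightarrow$ (i) is the crux. Put $B = \Gamma(\cX, \oh_{\cX})$ and let $\psi \colon \cX \to \Spec B$ be the canonical $A$-morphism. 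Given a quasi-coherent ideal $\cI \subseteq \oh_{\cX}$ with vanishing locus $Z$ and a point $x \notin Z$, global generation of $\cI$ produces a section $s \in \Gamma(\cX, \cI) \subseteq B$ that is a unit at $x$; hence the opens $\cX_s$ (for $s \in B$) form a basis of the topology, and by quasi-compactness finitely many of them, $\cX_{s_1}, \dots, \cX_{s_n}$, cover $\cX$. Since $\cX$ is quasi-compact and quasi-separated, $\Gamma(\cX_{s_i}, \oh_{\cX}) = B_{s_i}$ and $\psi^{-1}(D(s_i)) = \cX_{s_i}$; and, arguing as in \cite[II.5.1.2]{ega}, one checks that enough of the $\cX_{s_i}$ are affine (via Serre's criterion for affineness) that $\psi$ restricts to an isomorphism onto the corresponding $D(s_i)$. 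Thus $\psi$ is a quasi-compact open immersion onto an open subscheme of $\Spec B$, and composing with the affine morphism $\Spec B \to \Spec A$ exhibits $f$ as quasi-affine.

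The step I expect to be the main obstacle is precisely this last one --- verifying that the canonical morphism $\cX \to \sSpec_{\cY} f_* \oh_{\cX}$ is an open immersion, equivalently that sufficiently many of the loci $\cX_{s_i}$ are affine. For schemes this rests on Serre's cohomological criterion applied to the non-vanishing loci of global functions, the necessary $H^1$-vanishing coming out of global generation and quasi-compactness; for algebraic spaces one must invoke the corresponding statements for quasi-compact quasi-separated algebraic spaces, which is why the hypotheses ``strongly representable'' and ``representable of finite presentation'' are treated separately --- the finite-presentation assumption is what makes the algebraic-space bookkeeping (e.g.\ via noetherian approximation) go through. Once that is in hand, the reductions above assemble into the full proposition.
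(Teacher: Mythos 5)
Your formal reductions --- (ii)$\Leftrightarrow$(iii), (iv)$\Leftrightarrow$(v), the limit argument for the noetherian statements, the d\'evissage to an affine base via descent and base change, and (i)$\Rightarrow$(ii) by factoring a quasi-affine morphism as an open immersion followed by an affine morphism --- all match the paper's proof. The strongly representable case of (ii)$\Rightarrow$(i) is likewise handled the same way, by reducing to $f\colon X \to \Spec\Gamma(X,\oh_X)$ and citing \cite[II.5.1.2 and IV.5.1.2]{ega}.

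The gap is in the remaining case, where $f$ is representable of finite presentation and $\cX$ is only an algebraic space; this is exactly the case you flag as ``the main obstacle,'' and your sketch for it does not work. Two problems. First, the mechanism you propose --- cover $\cX$ by loci $\cX_{s_i}$ with $s_i \in B = \Gamma(\cX,\oh_{\cX})$ and show enough of them are affine ``via Serre's criterion, the necessary $H^1$-vanishing coming out of global generation'' --- is not sound: global generation of all quasi-coherent sheaves does not give any $H^1$-vanishing (on $\mathbb{A}^2\setminus\{0\}$ every quasi-coherent sheaf is globally generated, yet $H^1$ of the structure sheaf is nonzero). The actual EGA argument does not use Serre's criterion at all; it produces $s$ with $x \in X_s \subseteq U$ for an \emph{affine open} $U$ and observes $X_s = U_s$ is affine --- and the existence of affine open neighborhoods is precisely what is unavailable for a general algebraic space, so this route does not transfer. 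Second, you gesture at ``noetherian approximation'' to fix this, but that is not an argument. The paper's proof of this case is genuinely different: it first upgrades the surjectivity of the counit to the statement that $f^*f_*\cF \to \cF$ is an \emph{isomorphism} for every quasi-coherent $\cF$ (by choosing a free presentation of $\cF$ that stays right exact under $f_*$ and comparing); it then deduces that $X$ is separated by pulling back along a finite surjection $U \to X$ from a scheme (Rydh) and noting $U \to \Spec\Gamma(U,\oh_U)$ is an open immersion; it then shows $f$ is a monomorphism using that maps from affine schemes into $X$ are affine and determined by their pushed-forward structure sheaves, which the adjunction isomorphism pins down; and finally it concludes that $f$, being a finitely presented, separated, quasi-finite representable monomorphism, is quasi-affine by Zariski's main theorem \cite[Thm. 16.5]{lmb}. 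Without some argument of this kind (or an alternative of comparable substance), your proof of (ii)$\Rightarrow$(i) in the representable case is incomplete.
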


\begin{proof}
First, it is clear (ii) implies (iii).  To see (iii) implies (ii), suppose $\cG$ is a quasi-coherent $\oh_{\cY}$-module and $f^* \cG \to \cF$ is a surjection. The counit of the adjunction then gives a factorization $f^*\cG \to f^* f_* \cF \to \cF$, and hence $f^* f_* \cF \to \cF$ is also surjective.

We next show (i) is equivalent to (ii).  Note that the property of being quasi-affine is stable under composition and base change and descends under faithfully flat morphisms on the target.   Also, it is easy to see property (ii) for a quasi-compact (and quasi-separated) representable morphism of algebraic stacks is also stable under composition and descends under faithfully flat morphisms on the target.  Furthermore, if the target has quasi-affine diagonal, then property (ii) is stable under arbitrary base change.  Therefore, we can immediately reduce to the case where $f: X \to Y$ is a quasi-compact (and quasi-separated) morphism of algebraic spaces with $Y$ an affine scheme.

As open immersions and affine morphisms satisfy property (ii), quasi-affine morphisms also do.  Thus, (i) implies (ii).  For the converse, we may suppose that $f: X \to Y = \Spec \Gamma(X,\oh_X)$.  If $f$ is strongly representable, then $X$ is a scheme, so \cite[II.5.1.2 and IV.5.1.2]{ega} implies that $X$ is quasi-affine.  Now suppose that $f$ is finite type.  We first claim that for every quasi-coherent $\oh_X$-module $\cF$, the adjunction morphism $f^* f_* \cF \to \cF$ is an isomorphism.  Indeed, there exists a surjection $\gamma: \bigoplus_i \oh_Y \to f_* \cF$ which induces a surjection $\alpha: \bigoplus_i \oh_X \to f^* f_* \cF \to \cF$ (since $\cF$ is globally generated), with the property that $f_* \alpha = \gamma$.  By repeating, there is a presentation
$$\bigoplus_{j \in J} \oh_X \xrightarrow{\beta} \bigoplus_{i \in I} \oh_X \xrightarrow{\alpha} \cF \to 0$$
which remains right exact after applying $f_*$.  Therefore, we have a commutative diagram
$$\xymatrix{
f^* f_* (\bigoplus_{j \in J} \oh_X) \ar[r] \ar[d]	& f^* f_* ( \bigoplus_{i \in I} \oh_X ) \ar[r] \ar[d]	 & f^*f_* \cF \ar[r] \ar[d]	& 0 \\
\bigoplus_{j \in J} \oh_X \ar[r]		& \bigoplus_{i \in I} \oh_X  \ar[r]		& \cF \ar[r] & 0
}$$
with exact rows.  Since the left two vertical arrows are isomorphisms, so is $f^* f_* \cF \to \cF$.

We next show that $f$ is separated.  By \cite[Theorem B]{rydh_noetherian}, there exists a finite surjective morphism $U \to X$ from a scheme $U$.  Since the adjunction morphism $f^* f_* \cF \to \cF$ is an isomorphism for quasi-coherent $\oh_{X}$-modules, the diagram
$$\xymatrix{
U \ar[r] \ar[d]	& \Spec \Gamma(U, \oh_U) \ar[d] \\
X \ar[r]		& \Spec \Gamma(X,\oh_X)
}$$
is cartesian.  Since $U$ is a scheme and $U \to \Spec \Gamma(U, \oh_U)$ satisfies property (ii), $U \to \Spec \Gamma(U, \oh_U)$ is an open immersion.  In particular, $U$ is separated and it follows that $X$ is separated.

We show now that $f$ is a monomorphism. Let $h_1, h_2: T \to X$ be morphisms from an affine scheme $T$ such that $g = h_1 \circ f = h_2 \circ f$:
$$\xymatrix{
T\ar@<.5ex>[r]^{h_1} \ar@<-.5ex>[r]_{h_2} \ar[rd]_g	& X \ar[d]^f \\
								 		& \Spec \Gamma(X,\oh_X).
}$$
Since $X$ has affine diagonal, the morphisms $h_i$ are affine and thus determined by the quasi-coherent $\oh_X$-algebras $(h_i)_* \oh_T$.  Since adjunction is an isomorphism, we have $(h_i)_* \oh_T = f^* f_* (h_i)_* \oh_T = f^* g_* \oh_T$, so $h_1 = h_2$.

Finally, using the assumption that $f: X \to \Spec \Gamma(X, \oh_X)$ is finite presentation, we have that $f$ is a quasi-finite, separated representable morphism of algebraic spaces, hence by Zariski's main theorem (\cite[Thm. 16.5]{lmb}), $f$ is quasi-affine.

The equivalences with (iv) and (v) follow by standard direct limit methods using \cite[Theorem 15.4]{lmb}.
\end{proof}


\begin{remark}
\begin{enumeratei}
\item By a different method, Philipp Gross has recently shown the above proposition holds more generally for any quasi-compact (and quasi-separated) representable morphism $f: \cX \to \cY$ of algebraic stacks, with $\cY$ having quasi-affine diagonal.

\item The assumption that $\cY$ have quasi-affine diagonal is necessary for (ii) to imply (i).  For example, if $G \to \Spec k$ is any finite type group scheme, then $\Spec k \to BG$ satisfies (ii).  Indeed, for any $k$-vector space $V$, the adjunction map corresponds to the $V \otimes_k \Gamma(G, \oh_G) \to V$, which is surjective.  However, if $G \to \Spec k$ is an abelian variety, then $\Spec k \to G$ is not quasi-affine, and hence does not satisfy (i).
\end{enumeratei}
\end{remark}

\begin{defn}\label{def-observable}  Let $G \to S$ be a flat, finitely presented, quasi-affine group scheme.  A flat, finitely presented, quasi-affine subgroup scheme $H \subseteq G$ is \emph{observable} if every quasi-coherent $\oh_S[H]$-module is a quotient of a quasi-coherent $\oh_S[G]$-module.
\end{defn}

\begin{remark} If $S = \Spec k$, this is easily seen to be equivalent to the definition in \cite{bbhm}:  a subgroup scheme $H \subset G$ is observable if every finite dimensional $H$-representation is a sub-$H$-representation of a finite dimensional $G$-representation.
\end{remark}

We can now prove:

\noindent {\bf Theorem 1.3.} {\em
Let $G \to S$ be a flat, finitely presented, quasi-affine group scheme and $H \subseteq G$ a flat, finitely presented, quasi-affine subgroup scheme.  The following are equivalent:
\begin{enumeratei}
\item $H$ is observable;
\item for every quasi-coherent $\oh_S[H]$-module $\cF$, the counit morphism of the adjunction, $\text{Ind}_{H}^G \cF \to \cF$, is a surjection of $\oh_S[H]$-modules;
\item $BH \to BG$ is quasi-affine; and
\item $G/H \to S$ is quasi-affine.
\end{enumeratei}
If $S$ is noetherian, then the above are also equivalent to:
\begin{enumeratei} \setcounter{enumi}{3}
\item every coherent $\oh_S[H]$-module is a quotient of a coherent $\oh_S[G]$-module; and
\item for every coherent $\oh_S[H]$-module $\cF$, the counit morphism of the adjunction, $\text{Ind}_{H}^G \cF \to \cF$, is a surjection of $\oh_S[H]$-modules.
\end{enumeratei}
}

\begin{proof}  The equivalences follow from the definitions and Proposition \ref{proposition-quasi-affine}.  We only add that since $S \to BG$ is faithfully flat and finitely presented and $G/H \cong BH \times_{BG} S$, descent implies $BH \to BG$ is quasi-affine if and only if $G/H \to S$ is quasi-affine.
\end{proof}

\section{Existence of good moduli spaces}\label{section-gluing}

Given an algebraic stack $\cx$, we would like local conditions that guarantee the existence of a good moduli space.  Recall first the following definition:

\begin{defn}(\cite[Def. 6.1]{alper_good}) If $\pi:\cx \to Y$ is a good moduli space, an open substack $\mathcal{U}\subseteq \cx$ is {\em saturated for $\pi$} if $\pi^{-1}(\pi(\mathcal{U}))=\mathcal{U}$.
\end{defn}

Saturated substacks have the following nice property (see \cite[Rmk. 6.2]{alper_good}): if an open substack $\mathcal{U}\subseteq \cx$ is saturated for $\pi$, then $\pi(\mathcal{U})$ is open and $\pi|_{\mathcal{U}}:\mathcal{U}\to \pi(\mathcal{U})$ is a good moduli space.  Moreover, we have the following proposition:

\begin{lem} (\cite[Prop. 7.9]{alper_good}) \label{lemma-glue}
Suppose $\cx$ is a noetherian algebraic stack containing open substacks $\{\mathcal{U}_i\}_{i\in I}$ such that for each $i$ there exists a good moduli space $\pi_i:\mathcal{U}_i\to Y_i$, with $Y_i$ a scheme (resp., an algebraic space).  Let $\mathcal{U}=\bigcup \mathcal{U}_i$.  Then the following are equivalent:
\begin{enumeratei}
\item $\mathcal{U}_i\cap \mathcal{U}_j$ is saturated for $\pi_i$ for every $i,j$; and
\item there exists a good moduli space $\pi:\mathcal{U}\to Y$ with $Y$ a scheme (resp., an algebraic space), and algebraic subspaces $\widetilde{Y}_i\subseteq Y$ with $\widetilde{Y}_i\cong Y_i$ and $\pi^{-1}(\widetilde{Y}_i)=\mathcal{U}_i$.
\end{enumeratei}
\end{lem}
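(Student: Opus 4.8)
The plan is to prove the two implications separately. The implication $(ii)\Rightarrow(i)$ is essentially formal and I would dispatch it first. Given a global good moduli space $\pi\colon\mathcal{U}\to Y$ with algebraic subspaces $\widetilde Y_i\subseteq Y$ satisfying $\pi^{-1}(\widetilde Y_i)=\mathcal{U}_i$: since $\pi$ is surjective and universally submersive (Proposition~\ref{properties-good}(i)) and each $\mathcal{U}_i$ is open, each $\widetilde Y_i=\pi(\mathcal{U}_i)$ is open, and $\mathcal{U}_i=\pi^{-1}(\widetilde Y_i)$ is saturated for $\pi$. Then $\mathcal{U}_i\cap\mathcal{U}_j=\pi^{-1}(\widetilde Y_i\cap\widetilde Y_j)$ is again saturated for $\pi$, so $\pi|_{\mathcal{U}_i}\colon\mathcal{U}_i\to\widetilde Y_i$ is a good moduli space (a saturated open substack restricts to a good moduli space onto its image, as recalled above) which, by uniqueness of good moduli spaces for noetherian stacks (Proposition~\ref{properties-good}(iv)), is identified with $\pi_i$ under $\widetilde Y_i\cong Y_i$; transporting the saturatedness of $\mathcal{U}_i\cap\mathcal{U}_j$ for $\pi$ restricted to $\mathcal{U}_i$ along this identification yields (i).

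For $(i)\Rightarrow(ii)$ the idea is to build $Y$ by gluing the $Y_i$. Since $\mathcal{U}_i\cap\mathcal{U}_j$ is saturated for $\pi_i$, the image $Y_{ij}:=\pi_i(\mathcal{U}_i\cap\mathcal{U}_j)$ is open in $Y_i$ and $\pi_i$ restricts to a good moduli space $\mathcal{U}_i\cap\mathcal{U}_j\to Y_{ij}$; by the symmetry of hypothesis (i), the same holds for $\pi_j$, with open image $Y_{ji}\subseteq Y_j$. Uniqueness of good moduli spaces (Proposition~\ref{properties-good}(iv), applicable since $\mathcal{U}_i\cap\mathcal{U}_j$ is noetherian) then produces a unique isomorphism $\varphi_{ij}\colon Y_{ij}\xrightarrow{\sim}Y_{ji}$ with $\varphi_{ij}\circ\pi_i=\pi_j$ on $\mathcal{U}_i\cap\mathcal{U}_j$. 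An intersection of saturated opens is saturated, so $\mathcal{U}_i\cap\mathcal{U}_j\cap\mathcal{U}_k$ is saturated for $\pi_i$ with image exactly $Y_{ij}\cap Y_{ik}$; restricting the $\varphi$'s to the images of this triple overlap and invoking uniqueness once more forces the cocycle identity $\varphi_{jk}\circ\varphi_{ij}=\varphi_{ik}$. Gluing the schemes (resp.\ algebraic spaces) $Y_i$ along the open subobjects $Y_{ij}$ via the $\varphi_{ij}$ then produces a scheme (resp.\ algebraic space) $Y$ with each $Y_i$ realized as an open $\widetilde Y_i\subseteq Y$, and, since $\varphi_{ij}\circ\pi_i=\pi_j$ on overlaps, the $\pi_i$ glue to a morphism $\pi\colon\mathcal{U}\to Y$ with $\pi|_{\mathcal{U}_i}=\pi_i$.

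It then remains to check that $\pi$ is a good moduli space with $\pi^{-1}(\widetilde Y_i)=\mathcal{U}_i$. Surjectivity of the $\pi_i$ (Proposition~\ref{properties-good}(i)) shows the $\widetilde Y_i$ form a Zariski open cover of $Y$; and if $u\in\mathcal{U}_j$ has $\pi(u)\in\widetilde Y_i$, then $\pi(u)\in\widetilde Y_i\cap\widetilde Y_j$, which under the gluing identifications is $Y_{ji}$, so $u\in\pi_j^{-1}(Y_{ji})=\mathcal{U}_i\cap\mathcal{U}_j$ by saturatedness; hence $\pi^{-1}(\widetilde Y_i)=\mathcal{U}_i$. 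Thus $\pi$ restricts over each member of a Zariski cover of $Y$ to the good moduli space $\pi_i$, and since being a good moduli space is local on the target (it descends along faithfully flat morphisms, as recalled above), $\pi$ is a good moduli space.

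The step I expect to be the main obstacle is the gluing in $(i)\Rightarrow(ii)$: identifying the overlaps $Y_{ij}$ and $Y_{ji}$ canonically, verifying the cocycle condition on triple overlaps, and confirming that the assembled $\pi$ is well defined with $\pi^{-1}(\widetilde Y_i)=\mathcal{U}_i$. This is more careful bookkeeping than genuine difficulty: everything rests on two facts already in hand, namely that a saturated open substack restricts to a good moduli space onto its (open) image, and the uniqueness of good moduli spaces for noetherian stacks, which makes the comparison isomorphisms---and hence the cocycle relations---canonical. One should also note that $\mathcal{U}$, being an open substack of the noetherian stack $\cx$, is quasi-compact, so one may pass to a finite subcover if desired to sidestep any set-theoretic concern about the diagonal of $Y$.
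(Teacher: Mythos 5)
The paper does not prove this lemma --- it is quoted verbatim from \cite[Prop.\ 7.9]{alper_good} --- so there is no in-paper argument to compare against. Your proof is correct and is the standard one (and essentially the argument in the cited reference): the forward direction by gluing the $Y_i$ along the open images $Y_{ij}=\pi_i(\mathcal{U}_i\cap\mathcal{U}_j)$, using that a saturated open restricts to a good moduli space onto its open image and that uniqueness (Proposition \ref{properties-good}(iv), valid here since everything is noetherian) makes the transition isomorphisms and cocycle condition canonical, and the converse by observing that $\mathcal{U}_i\cap\mathcal{U}_j=\pi^{-1}(\widetilde{Y}_i\cap\widetilde{Y}_j)$ is saturated for $\pi$ and transporting this along the identification $\pi|_{\mathcal{U}_i}\cong\pi_i$.
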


It is useful to have an intrinsic definition of saturated that does not refer to a good moduli space, and to use this definition to find conditions that guarantee the existence of open covers by saturated substacks.  Combined with the previous proposition, this would enable us to give local conditions guaranteeing local good moduli spaces always glue.  We first make the following definition:

\begin{defn} Suppose $\cX$ is an algebraic stack over a scheme $S$ and $\cZ \subseteq \cX$ is a closed substack.  Define $F_{\cX}(\cZ)\subseteq |\cX|$ to be the set of those points $x\in |\cX|$ over maps $s: \Spec k(x) \to S$ such that $\overline{\{x\}} \cap \cZ_s \neq \emptyset$ in $|\cX_s|$ (where by abuse of notation we also consider $x \in |\cX_s|$ and the closure $\overline{ \{x \}} \subseteq |\cX_s|$).
\end{defn}

\begin{remark}
\begin{enumeratei}
\item While the definition of $F_{\cX}(\cZ)$ appears to depend on the base scheme $S$, it is in fact independent.




\item Note that if $F_{\cX}(\cZ) \subseteq |\cX|$ is closed, then $F_{\cX}(F_{\cX}(\cZ)) = F_{\cX}(\cZ)$.
\end{enumeratei}
\end{remark}



\begin{lem} \label{lemma-f-closed}
Suppose $\pi:\cx \to Y$ is a good moduli space.  Then $|\pi^{-1}(\pi(\cz))|=F_{\cX}(\cz)$ for every closed substack $\cz\subseteq \cx$.  In particular, $F_{\cX}(\cZ)$ is closed.
\end{lem}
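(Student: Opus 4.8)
The statement has two parts: the set-theoretic identity $|\pi^{-1}(\pi(\cZ))| = F_{\cX}(\cZ)$, and the consequence that $F_{\cX}(\cZ)$ is closed. The second follows immediately from the first, since $\pi$ is continuous (indeed universally closed by Proposition \ref{properties-good}(i)) and $\pi(\cZ)$ is closed in $Y$ again by Proposition \ref{properties-good}(i), so $\pi^{-1}(\pi(\cZ))$ is closed in $|\cX|$. So the real content is the identity of the two subsets of $|\cX|$, and I expect the main obstacle to be keeping straight the fiberwise nature of the definition of $F_{\cX}(\cZ)$ versus the global statement about $\pi$.

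First I would reduce to a fiberwise statement. Fix a point $x \in |\cX|$ lying over $s \colon \Spec k(x) \to S$. By Proposition \ref{properties-good}(vii), the base change $\pi_s \colon \cX_s \to Y_s := Y \times_S \Spec k(x)$ is again a good moduli space, and $\cZ_s \subseteq \cX_s$ is a closed substack. Moreover, membership of $x$ in $\pi^{-1}(\pi(\cZ))$ can be checked after this base change: $x \in |\pi^{-1}(\pi(\cZ))|$ if and only if $\pi_s(x)$ (the image of $x$ viewed in $|\cX_s|$) lies in the image $\pi_s(\cZ_s)$ — here one uses that $Y_s \to Y$ is the base change and that the image of a closed substack behaves well; care is needed, but this is exactly the kind of statement that Proposition \ref{properties-good}(ii) and (iii) are designed for. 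So I may assume $S = \Spec k$ with $k$ algebraically closed (after a further field extension, which is harmless since all the relevant properties descend and the formation of $F_{\cX}$ and of images commute with such base change), and it suffices to show $|\pi^{-1}(\pi(\cZ))| = \{ x \in |\cX| : \overline{\{x\}} \cap \cZ \neq \emptyset \}$.

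Now I would argue both inclusions using Proposition \ref{properties-good}. For $\supseteq$: if $\overline{\{x\}} \cap \cZ \neq \emptyset$, pick a point $z$ in this intersection; then $z \in |\cZ|$ and $z \in \overline{\{x\}}$, so $\pi(z) \in \pi(\cZ)$ and $\pi(z) \in \overline{\{\pi(x)\}}$ (continuity), hence $\pi(x) \in \pi(\cZ)$ because $\pi(\cZ)$ is closed; thus $x \in \pi^{-1}(\pi(\cZ))$. For $\subseteq$: suppose $\pi(x) = \pi(z)$ for some $z \in |\cZ|$. I want to produce a point in $\overline{\{x\}} \cap \cZ$. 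This is where the structure of good moduli spaces does real work: by Proposition \ref{properties-good}(iii), over the algebraically closed field $k$, points of $\cX$ with the same image under $\pi$ have closures that meet — more precisely, there is a unique closed point in each fiber, and $\overline{\{x\}}$ and $\overline{\{z\}}$ both contain it. Let $w$ be the unique closed point of $\cX$ in the fiber $\pi^{-1}(\pi(x))$ (its existence and uniqueness is part of the good moduli space picture; one can see it via $\overline{\{x\}} \cap \pi^{-1}(\pi(x))$ being a nonempty closed substack mapping to a point, hence cohomologically affine over a point, forcing a closed point, and uniqueness from Proposition \ref{properties-good}(ii)). Since $z \in |\cZ|$ and $\cZ$ is closed, $\overline{\{z\}} \subseteq \cZ$, so $w \in \overline{\{z\}} \subseteq \cZ$; and $w \in \overline{\{x\}}$. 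Hence $w \in \overline{\{x\}} \cap \cZ$, which is therefore nonempty, giving $x \in F_{\cX}(\cZ)$.

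The step I expect to be most delicate is the descent to $S = \Spec k$: one must check that $x \in \pi^{-1}(\pi(\cZ))$ is genuinely detected fiberwise, i.e. that forming the scheme-theoretic image of $\cZ$ commutes with the base change $Y_s \to Y$ in the relevant way, and that $F_{\cX}(\cZ)$ is by definition already the fiberwise version so nothing is lost. Once that bookkeeping is in place, both inclusions are short, resting only on continuity of $\pi$, closedness of $\pi(\cZ)$ (Proposition \ref{properties-good}(i)), and the existence/uniqueness of the closed point in each geometric fiber (a standard consequence of Definition \ref{definition-good} together with Proposition \ref{properties-good}(ii)–(iii)).
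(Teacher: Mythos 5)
Your overall strategy --- fix $x$, pass to the fiber over $s:\Spec k(x)\to S$, and then use the separation properties of good moduli spaces for both inclusions --- is the same as the paper's. In particular, your treatment of the inclusion $|\pi^{-1}(\pi(\cZ))|\subseteq F_{\cX}(\cZ)$ via the unique closed point of the fiber is a correct, slightly more explicit version of the paper's appeal to $\im Z_1\cap \im Z_2=\im(Z_1\cap Z_2)$ (Proposition \ref{properties-good}(ii)), and your handling of the ``in particular closed'' claim and of the fiberwise reduction is at the same level of rigor as the paper's own proof, which also leaves that bookkeeping implicit.

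There is, however, a genuine error in your argument for the reverse inclusion $F_{\cX}(\cZ)\subseteq |\pi^{-1}(\pi(\cZ))|$. From $z\in\overline{\{x\}}\cap\cZ_s$ you correctly obtain $\pi_s(z)\in\pi_s(\cZ_s)$ and $\pi_s(z)\in\overline{\{\pi_s(x)\}}$, i.e.\ $\pi_s(x)\rightsquigarrow\pi_s(z)$; but you then conclude $\pi_s(x)\in\pi_s(\cZ_s)$ ``because $\pi_s(\cZ_s)$ is closed.'' Closed sets are stable under specialization, whereas here you need $\pi_s(\cZ_s)$ to contain a \emph{generization} of one of its points, which closedness does not give. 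Indeed, continuity plus closed images cannot suffice for this direction: for a general continuous map (say the identity on $\mathbb{A}^1$ with $\cZ$ a closed point), a point whose closure meets $\cZ$ need not map into the image of $\cZ$. What makes the step true for a good moduli space is the separation property: since $z\in\overline{\{x\}}\cap\overline{\{z\}}$, the closures of $x$ and $z$ meet in $|\cX_s|$, so Proposition \ref{properties-good}(iii) (the equivalence-relation statement, which is what the paper cites as \cite[Thm.\ 4.15(iv)]{alper_good}) forces $\pi_s(x)=\pi_s(z)\in\pi_s(\cZ_s)$. You invoke exactly this property in the other direction, so the repair is one line, but as written this step of your proof does not go through.
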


\begin{proof}
Suppose $x\in \pi^{-1}(\pi(\cz))$ and let $s: \Spec k(x) \to S$.  Then $\pi_s(\overline{\{x\}})\cap \pi_s(\cz_s)\neq \emptyset$ in $Y_s$.  By \cite[Thm. 4.15(iii)]{alper_good}, this implies $\overline{\{x\}}\cap \cz_s\neq \emptyset$ in $\cx_s$, and hence $x\in F_{\cX}(\cZ)$. Conversely, suppose $x\in F_{\cX}(\cz)$ over $s \in S$ and choose any point $z \in \overline{\{x\}}\cap \cz_s \subseteq |\cx_s|$.  Then $\overline{\{x\}} \cap\overline{\{z\}} \neq \emptyset$ in $\cx_s$, and so by \cite[Thm. 4.15(iv)]{alper_good} we have $\pi_s(x)=\pi_s(z)$. 
It follows that $x\in \pi^{-1}(|\pi(\cz)|)$.
\end{proof}

\begin{lem} \label{lemma-saturated}
Suppose $\pi:\cx \to Y$ is a good moduli space.  Let $\cU \subseteq \cX$ be an open substack with reduced complement $\cZ$.  Then the following are equivalent:
\begin{enumeratei}
\item $\cU$ is saturated for $\pi$;
\item $F_{\cX}(\cZ) = \cZ$;
\item for every point $u\in |\mathcal{U}|$ over $s: \Spec k(u) \to S$, the closure $\overline{\{u\}} \subseteq |\cX_s|$ is contained in $|\cU_s|$; and
\item every point $u \in |\cU|$ over $s: \Spec k(u) \to S$ that is closed in $\cU_s$ is also closed in $\cX_s$.
\end{enumeratei}
\end{lem}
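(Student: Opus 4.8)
The plan is to prove the chain of implications (i) $\Rightarrow$ (ii) $\Rightarrow$ (iii) $\Rightarrow$ (iv) $\Rightarrow$ (i), using Lemma \ref{lemma-f-closed} as the main bridge between the "saturated" condition and the operation $F_{\cX}(-)$, together with the description in Proposition \ref{properties-good}(iii) of how the fibers $Y_s$ classify points of $\cX_s$ up to closure-meeting. Throughout I will use that good moduli spaces are stable under base change (Proposition \ref{properties-good}(vii)), so that $\pi_s : \cX_s \to Y_s$ is again a good moduli space for each $s : \Spec k(x) \to S$, and hence that closure-meeting in a single fiber $\cX_s$ can be detected on $Y_s$ via \cite[Thm. 4.15]{alper_good}.

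First, (i) $\Rightarrow$ (ii): by Lemma \ref{lemma-f-closed}, $F_{\cX}(\cZ) = |\pi^{-1}(\pi(\cZ))|$. Since $\cZ$ is the complement of the saturated open $\cU$, we have $\pi^{-1}(\pi(\cZ)) = \pi^{-1}(Y \setminus \pi(\cU))$, and because $\cU$ is saturated, $\pi^{-1}(\pi(\cU)) = \cU$, so $\pi^{-1}(Y\setminus\pi(\cU))$ is exactly $\cZ$ (using that $\pi$ is surjective, Proposition \ref{properties-good}(i)); hence $F_{\cX}(\cZ) = \cZ$ as sets, and since $\cZ$ is reduced this is the stated equality of closed substacks. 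Next, (ii) $\Rightarrow$ (iii): suppose $u \in |\cU|$ lies over $s$ but $\overline{\{u\}} \cap |\cZ_s| \neq \emptyset$; then by definition $u \in F_{\cX}(\cZ) = \cZ$, contradicting $u \in |\cU|$. So the closure of $u$ in $|\cX_s|$ avoids $\cZ_s$, i.e. is contained in $|\cU_s|$. The implication (iii) $\Rightarrow$ (iv) is immediate: if $u \in |\cU|$ is closed in $\cU_s$, then $\overline{\{u\}} \subseteq |\cU_s|$ by (iii), but a point closed in $\cU_s$ has closure in $\cU_s$ equal to itself, and since $\overline{\{u\}}$ taken in $\cX_s$ lies inside $|\cU_s|$ and its intersection with the open $\cU_s$ is $\{u\}$, the full closure in $\cX_s$ is also $\{u\}$; so $u$ is closed in $\cX_s$.

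The substantive step is (iv) $\Rightarrow$ (i), and this is where I expect the only real work. We must show $\pi^{-1}(\pi(\cU)) = \cU$; equivalently, that no point $z \in |\cZ|$ maps into $\pi(\cU)$. Suppose for contradiction $z \in |\cZ|$ and $\pi(z) = \pi(u)$ for some $u \in |\cU|$. Passing to the fiber over $s : \Spec k(z) \to S$ (and using that a point of $|\cX|$ has a representative with finite residue extension, as noted in the Notation section, to arrange $u$ and $z$ in the same fiber $\cX_s$ after a further harmless base change), we have $\pi_s(z) = \pi_s(u)$ in $Y_s$. By Proposition \ref{properties-good}(iii) / \cite[Thm. 4.15]{alper_good}, $\overline{\{z\}} \cap \overline{\{u\}} \neq \emptyset$ in $|\cX_s|$; pick a point $w$ in this intersection, and by replacing $w$ with a point in $\overline{\{w\}}$ that is closed in $\cX_s$ (such exists since $\cX_s$ is quasi-compact — here is where finite-typeness over a field, hence the existence of closed points in every closed substack of the fiber, is used) we may assume $w$ is closed in $\cX_s$. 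Now $w \in \overline{\{u\}}$: if $w$ were in $|\cU_s|$, then $w$ would be a closed point of $\cX_s$ lying in $\cU_s$, hence closed in $\cU_s$; but then by (iv) applied in reverse we would still need $u$ closed, so instead I argue directly: take $u'$ a closed point of $\cX_s$ in $\overline{\{u\}} \cap |\cU_s|$ (possible since $\cU_s$ is open and $u \in \cU_s$, and $\overline{\{u\}}$ meets $\cU_s$), which is then closed in $\cU_s$, hence closed in $\cX_s$ by (iv); but $\overline{\{z\}}$ and $\overline{\{u\}}$ meet, so $\overline{\{z\}}$ and $\overline{\{u'\}}$ meet as well after replacing $u$ by the generization argument, forcing $\overline{\{u'\}} \subseteq \overline{\{z\}} \subseteq |\cZ_s|$, contradicting $u' \in |\cU_s|$. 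Thus $\pi(z)$ cannot lie in $\pi(\cU)$, so $\cU$ is saturated. The delicate point to get right is the reduction to a common fiber and the bookkeeping of "closed point" versus "closed in the fiber"; one should be careful that "closed in $\cX_s$" for $\cX_s$ of finite type over a field is detected by closedness of the underlying point, and that $\cZ_s$ being closed in $\cX_s$ means every point of $|\cX_s|$ generizing to something in $\cZ_s$ and closed lies in $\cZ_s$.
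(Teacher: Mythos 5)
Your proof is correct and follows essentially the same route as the paper's: Lemma \ref{lemma-f-closed} gives (i)$\Leftrightarrow$(ii)$\Leftrightarrow$(iii), and the direction involving (iv) is settled by producing a point closed in $\cU_s$ but not in $\cX_s$ via the closure-intersection / unique-closed-point-in-fiber property of good moduli spaces (\cite[Thm.\ 4.15]{alper_good}). One phrasing slip worth fixing: in the (iv)$\Rightarrow$(i) step you should take $u'$ to be a point of $\overline{\{u\}}\cap|\cU_s|$ that is \emph{closed in $\cU_s$} and only then conclude it is closed in $\cX_s$ by (iv) --- calling it ``a closed point of $\cX_s$'' at the outset makes the appeal to (iv) circular as written, though the intended argument is clearly the correct one.
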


\begin{proof}
First note that $\pi^{-1}(\pi(\cU)) = \cU$ if and only if $\pi^{-1}(\pi(\cZ)) = \cZ$; indeed both are equivalent to the statement that for every $s: \Spec k \to S$, $u \in |\cU_s|$ and $z \in |\cZ_s|$, one has $\overline{ \{u\}} \cap \overline{ \{z \}} = \emptyset$ in $|\cX_s|$.  By Lemma \ref{lemma-f-closed}, we have that (i) through (iii) are equivalent.  It is clear (iii) implies (iv).  Conversely, let $u \in |\cU|$ over $s: \Spec k(u) \to S$ be such that $x_0 \in \overline{ \{ u \}} \cap \cZ_s \subseteq |\cX_s|$ is a closed point.  Then for any closed point $u_0 \in \overline{ \{u\}} \subseteq |\cU_s|$, we also have that $x_0 \in \overline{ \{ u_0 \}}$ by \cite[Thm 4.15]{alper_good}.  In particular, $u_0 \in |\cU_s|$ is closed but $u_0 \in |\cX_s|$ is not closed, violating (iv).
\end{proof}

\begin{remark} If $\cX$ is finite type over $S = \Spec k$ with $k$ algebraically closed, then the equivalence of (i) and (iv) simply states that an open substack $\cU \subseteq \cX$ is saturated for $\pi$ if and only if every closed point $u \in \cU(k)$ is also closed in $\cX$, i.e., the open immersion $\cU \hookrightarrow \cX$ maps closed points to closed points.
\end{remark}

By Lemma \ref{lemma-saturated}, it is reasonable to make the following definition:

\begin{defn} An open substack $\mathcal{U}\subseteq \cx$ is {\em saturated} if for every point $u\in |\mathcal{U}|$ over $s: \Spec k(u) \to S$ , the closure $\overline{\{u\}} \subseteq |\cX_s|$ is contained in $|\cU_s|$.
\end{defn}

\begin{remark} Note that $\cU \subseteq \cX$ is saturated if and only if $F_{\cX}(\cZ) = |\cZ|$ where $\cZ = \cX \setminus \cU$ is the reduced complement.  See \cite[Section 2]{alper_quotient} for more general notions of saturated and weakly saturated morphisms, as well as their properties.
\end{remark}

As a converse to Lemma \ref{lemma-f-closed}, we have the following lemma:

\begin{lem} (cf. \cite[Lem. 1]{bbs_three_theorems}) \label{lemma-saturated-opens}
Suppose the set $F_{\cX}(\cZ)\subseteq |\cX|$ is closed for every closed substack $\cz\subseteq \cx$. Then for every open substack $\cU\subseteq \cx$, there exists an open substack $\cV\subseteq \cU$ such that:
\begin{enumeratei}
\item $\cV \subseteq \cX$ is saturated; and
\item for every $u\in |\cU|$ over $s: \Spec k(u) \to S$ with $\overline{\{u\}} \subseteq \mathcal{\cX}_s$ contained in $\cU_s$, one has $u \in |\cV|$.  (In particular, any point $u \in |\cU|$ over $s: \Spec k(u) \to S$ which is closed in $\cX_s$ is contained in $\cV_s$.)
\end{enumeratei}
\end{lem}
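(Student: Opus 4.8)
The plan is to take $\cV$ to be the complement of $F_{\cX}(\cZ)$, where $\cZ = \cX \setminus \cU$ denotes the reduced complement of $\cU$. By hypothesis $F_{\cX}(\cZ) \subseteq |\cX|$ is closed, so there is a unique open substack $\cV \subseteq \cX$ with $|\cV| = |\cX| \setminus F_{\cX}(\cZ)$. To see that $\cV \subseteq \cU$, it suffices to observe $|\cZ| \subseteq F_{\cX}(\cZ)$: for $z \in |\cZ|$ over $s : \Spec k(z) \to S$, the point $z$ itself lies in $\overline{\{z\}} \cap \cZ_s \subseteq |\cX_s|$, so $z \in F_{\cX}(\cZ)$; hence $\cV = \cX \setminus F_{\cX}(\cZ) \subseteq \cX \setminus \cZ = \cU$.

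For property (ii), let $u \in |\cU|$ lie over $s : \Spec k(u) \to S$ with $\overline{\{u\}} \subseteq |\cX_s|$ contained in $|\cU_s|$. Since base change commutes with the complementary decomposition $\cX = \cU \sqcup \cZ$ on underlying topological spaces, $\cU_s$ and $\cZ_s$ are complementary open and closed substacks of $\cX_s$; thus $\overline{\{u\}} \subseteq |\cU_s|$ is equivalent to $\overline{\{u\}} \cap |\cZ_s| = \emptyset$, i.e.\ to $u \notin F_{\cX}(\cZ)$, i.e.\ to $u \in |\cV|$. The parenthetical assertion is the special case in which $u$ is closed in $\cX_s$, so that $\overline{\{u\}} = \{u\} \subseteq |\cU_s|$.

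For property (i) I must show $\cV$ is saturated, which by the remark following the definition of \emph{saturated} amounts to $F_{\cX}(\cZ') = |\cZ'|$, where $\cZ' = \cX \setminus \cV$ is the reduced complement of $\cV$, i.e.\ the reduced closed substack supported on $F_{\cX}(\cZ)$. Since $F_{\cX}(-)$ depends only on the underlying closed subset of its argument, $F_{\cX}(\cZ') = F_{\cX}(F_{\cX}(\cZ))$, and because $F_{\cX}(\cZ)$ is closed the idempotence remark gives $F_{\cX}(F_{\cX}(\cZ)) = F_{\cX}(\cZ) = |\cZ'|$, as required.

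Each individual step is short, and the construction of $\cV$ is essentially forced once one asks for a saturated open inside $\cU$; the actual content is packaged into the standing hypothesis together with the idempotence of $F_{\cX}$ on closed subsets, so there is no single hard step. The only points demanding care are the insensitivity of $F_{\cX}$ to replacing a closed substack by its reduction (so that $F_{\cX}(F_{\cX}(\cZ))$ is literally the expression occurring in the idempotence remark), and the compatibility of forming a fiber $\cX_s$ with the decomposition $\cX = \cU \sqcup \cZ$; both are routine once unwound from the definitions.
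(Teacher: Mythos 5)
Your proposal is correct and follows exactly the paper's construction: the paper's one-line proof also takes $\cV = \cX \setminus F_{\cX}(\cZ)$ for $\cZ$ the reduced complement of $\cU$, leaving the verifications you spell out (containment $|\cZ|\subseteq F_{\cX}(\cZ)$, the fiberwise characterization for (ii), and idempotence of $F_{\cX}$ for saturation) to the reader.
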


\begin{proof}
Let $\cz$ be the reduced closed complement $\cX \setminus \cU$.   Then $\cV = \cX \setminus F_{\cX}(\cZ) \subseteq \cX$ is a saturated open substack contained in $\cU$ with the desired property.
\end{proof}

Since we are interested in determining conditions guaranteeing the existence of a good moduli space, Lemmas \ref{lemma-glue}, \ref{lemma-f-closed} and \ref{lemma-saturated-opens} suggest we should establish when the sets $F_{\cX}(\cZ)$ are closed for all closed substacks $\cZ\subseteq \cX$.  We first give conditions guaranteeing the sets $F_{\cX}(\cZ)$ are constructible.

\begin{lem} \label{lemma-f-constructible}  Suppose $\cX$ be a noetherian algebraic stack such that there exists a locally quasi-finite, universally submersive morphism $f:\cW \to \cX$ from an algebraic stack $\cW$ admitting a good moduli space.  Then for every closed substack $\cZ \subseteq \cX$, the set $F_{\cX}(\cZ) \subseteq \cX$ is constructible.
\end{lem}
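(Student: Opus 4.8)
The plan is to reduce the constructibility of $F_{\cX}(\cZ)$ to the corresponding statement on $\cW$, where we have a good moduli space at our disposal and hence Lemma \ref{lemma-f-closed} applies. First I would pull back: let $\cZ_{\cW} = f^{-1}(\cZ)$, a closed substack of $\cW$. Since $\cW$ admits a good moduli space $\pi: \cW \to Y$, Lemma \ref{lemma-f-closed} gives that $F_{\cW}(\cZ_{\cW}) = |\pi^{-1}(\pi(\cZ_{\cW}))|$ is closed in $|\cW|$. The main work is then to compare $F_{\cX}(\cZ)$ with the image $f(F_{\cW}(\cZ_{\cW}))$ under the map $|f|: |\cW| \to |\cX|$. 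I expect equality: if $w \in |\cW|$ lies over $s$ and $\overline{\{w\}} \cap (\cZ_{\cW})_s \neq \emptyset$ in $|\cW_s|$, then since $f$ is submersive (in particular, it sends specializations to specializations on the level of topological spaces, or at least after passing to fibers over $s$), the image point $x = f(w)$ satisfies $\overline{\{x\}} \cap \cZ_s \neq \emptyset$, so $f(F_{\cW}(\cZ_{\cW})) \subseteq F_{\cX}(\cZ)$. For the reverse inclusion, given $x \in F_{\cX}(\cZ)$ over $s$, choose $z \in \overline{\{x\}} \cap \cZ_s$; since $f$ is universally submersive, the base-changed map $\cW_s \to \cX_s$ is submersive, hence surjective on topological spaces, and moreover every specialization in $\cX_s$ lifts to a specialization in $\cW_s$ (this is exactly what submersiveness buys us, possibly combined with the going-down-type behavior that surjective submersive maps of noetherian spaces enjoy). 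Thus we may lift the specialization $x \rightsquigarrow z$ to $w \rightsquigarrow w'$ in $|\cW_s|$ with $f(w) = x$ and $f(w') = z \in (\cZ_{\cW})_s$, giving $w \in F_{\cW}(\cZ_{\cW})$ and $x \in f(F_{\cW}(\cZ_{\cW}))$.

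Having identified $F_{\cX}(\cZ)$ with the set-theoretic image of the closed set $F_{\cW}(\cZ_{\cW})$ under $|f|$, the constructibility follows from Chevalley's theorem: $f: \cW \to \cX$ is a morphism of noetherian algebraic stacks (it is locally quasi-finite, hence in particular of finite type, and $\cX$ is noetherian, so $\cW$ is too), and the image of a constructible subset of a noetherian algebraic stack under a finite type morphism is constructible. Concretely one can check this after a smooth presentation: pick a smooth surjection $U \to \cX$ from a noetherian scheme, base-change $f$ to get $\cW \times_{\cX} U \to U$, take a further smooth presentation $V \to \cW \times_{\cX} U$ from a scheme, and apply classical Chevalley constructibility to the finite-type morphism of schemes $V \to U$; constructibility is smooth-local on the target, so this descends to the statement on $\cX$.

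The step I expect to be the main obstacle is the lifting of specializations in the second inclusion above — precisely pinning down why ``universally submersive'' allows one to lift a single specialization $x \rightsquigarrow z$ in the fiber $\cX_s$ to a specialization in $\cW_s$ mapping a chosen preimage of $x$ onto a preimage of $z$. Submersiveness alone gives that $|\cW_s| \to |\cX_s|$ is a quotient map, which yields surjectivity and tells us that closed sets are detected downstairs, but lifting a specialization through a chosen point typically requires a bit more: one argues by restricting to the closed substack $\overline{\{x\}} \subseteq \cX_s$ with its reduced structure, base-changing $f$ to it (still universally submersive, still locally quasi-finite), and observing that the preimage of $x$ is then a generic point of some component of $\cW_s \times_{\cX_s} \overline{\{x\}}$ whose closure must surject onto $\overline{\{x\}}$ and hence meets the preimage of $z$. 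The local quasi-finiteness of $f$ is not strictly needed for this argument but guarantees $\cW$ is noetherian of finite type over $\cX$, which is what makes Chevalley applicable in the final step; I would remark that one could likely weaken ``locally quasi-finite'' to ``finite type'' throughout, consistent with the paper's comment that stronger versions of the lemma are expected to hold.
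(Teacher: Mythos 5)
Your proposal follows essentially the same route as the paper's proof: both establish $F_{\cX}(\cZ) = f\bigl(F_{\cW}(f^{-1}(\cZ))\bigr)$ by applying Lemma \ref{lemma-f-closed} on $\cW$ and lifting specializations along the quasi-finite, universally submersive morphism on fibers, then conclude by Chevalley's theorem (a step the paper leaves implicit but you make explicit). The specialization-lifting step you flag as delicate is asserted with no more justification in the paper than in your sketch, so there is no gap relative to the paper's own argument.
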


\begin{remark}  We note that \'etale morphisms and finite morphisms are locally quasi-finite and universally submersive.
\end{remark}

\begin{proof}
By Lemma \ref{lemma-f-closed}, $F_{\cW}(f^{-1}(\cZ)) \subseteq |\cW|$ is closed.  We claim that $f( F_{\cW}(f^{-1}(\cZ))) = F_{\cX}(\cZ)$.  The containment $\subseteq$ is clear.  Conversely, if $x \in F_{\cX}(\cZ)$ is over $s: \Spec k(x) \to S$, then there exists a specialization $x \rightsquigarrow x_0$ with $x_0 \in |\cZ_s|$.  Since $\cW_s \to \cZ_s$ is quasi-finite and submersive, there is a specialization $w \rightsquigarrow w_0$ in $|\cW_s|$ over $x \rightsquigarrow x_0$.  Furthermore, the field extension $k(x) \to k(w)$ is finite which implies that $w \in F_{\cW}(f^{-1}(\cZ))$ if and only if $\overline{\{w \}} \cap f^{-1}(\cZ) \neq \emptyset$.  Therefore, $w \in F_{\cW}(f^{-1}(\cZ))$ and $x \in f( F_{\cW}(f^{-1}(\cZ)))$.
\end{proof}

\begin{lem} \label{lemma-constructibility}
Let $\cX$ be an algebraic stack of finite type over an algebraically closed field $k$.  Suppose that:
\begin{enumeratei}
\item $\cX \cong [X/G]$, where $X$ is a scheme and $G$ is a connected algebraic group; and
\item stabilizers of closed points in $\cX$ are linearly reductive.
\end{enumeratei}
 Then for every closed substack $\cZ \subseteq \cX$, the set $F_{\cX}(\cZ) \subseteq \cX$ is constructible.
\end{lem}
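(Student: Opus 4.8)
The strategy is to reduce to a single morphism to which we can apply Lemma~\ref{lemma-f-constructible}. The hypothesis gives $\cX\cong[X/G]$ with $G$ connected and linearly reductive stabilizers at closed points, and we want to produce a locally quasi-finite, universally submersive $f:\cW\to\cX$ with $\cW$ admitting a good moduli space; then constructibility of $F_{\cX}(\cZ)$ follows immediately. The key point is that a quotient stack by a connected group with nice stabilizers is, \'etale-locally or up to a quasi-finite cover, of the form $[\Spec B/H]$ with $H$ linearly reductive, and such a stack always has a good moduli space (namely $\Spec B^H$, which is a good moduli space since $H$ is linearly reductive and the map is cohomologically affine). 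The difficulty is that stabilizers jump, so no single $H$ works globally; we must cover $\cX$ by finitely many pieces (using noetherianness) on each of which we can find such a presentation.

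\textbf{First steps.} First I would stratify $\cX$ by the (reductive) stabilizer type of closed points, or more robustly, invoke a Luna-type slice statement: near each closed point $x\in\cX(k)$ with linearly reductive stabilizer $G_x$, there is an \'etale (or at least quasi-finite flat) morphism $[\Spec B/G_x]\to\cX$ whose image is an open neighborhood of $x$, where $G_x$ acts on $\Spec B$ fixing the point over $x$. Since $G_x$ is linearly reductive, $[\Spec B/G_x]\to\Spec B^{G_x}$ is a good moduli space. By noetherianness of $\cX$, finitely many such neighborhoods $\cU_1,\dots,\cU_n$ cover $\cX$ (one needs to check every point specializes to a closed point, which holds because $\cX$ is finite type over $k$ with $G$ acting, so closed orbits exist in orbit closures). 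Take $\cW=\bigsqcup_i \cW_i$ where $\cW_i=[\Spec B_i/G_{x_i}]\to\cU_i\hookrightarrow\cX$; then $f:\cW\to\cX$ is locally quasi-finite (each $\cW_i\to\cU_i$ is \'etale or quasi-finite) and surjective. To get universal submersiveness I would arrange the $\cW_i\to\cU_i$ to be \'etale and surjective onto the open $\cU_i$, so that $f$ is \'etale and surjective, hence universally submersive; the disjoint union $\cW=\bigsqcup\cW_i$ admits a good moduli space since each $\cW_i$ does.

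\textbf{Main obstacle.} The crux — and the reason the connectedness of $G$ and the quotient-stack hypothesis are genuinely used — is producing the \'etale (or locally quasi-finite, universally submersive) cover by stacks with good moduli spaces; this is essentially an \'etale-local structure theorem for $[X/G]$ at points with linearly reductive stabilizer, of Luna slice type. If a clean such statement is not available in the needed generality, the fallback is to stratify: on the locally closed stratum $\cX_H$ of points with stabilizer (conjugate to) a fixed reductive $H$, the stack is a gerbe-like object over $X_H/N_G(H)$ or can be presented with $H$ acting, giving good moduli spaces stratum by stratum; patching these into a single locally quasi-finite universally submersive $\cW\to\cX$ requires care since strata are only locally closed, not open, so one works with the open stratum and induction on dimension, using that $F_\cX$ behaves well under the decomposition $F_\cX(\cZ)\cap\cU$ versus $F_\cX(\cZ)\cap(\cX\setminus\cU)$. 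Either way, once $\cW\to\cX$ is constructed, Lemma~\ref{lemma-f-constructible} finishes the proof without further work.
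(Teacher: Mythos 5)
Your top-level strategy is the same as the paper's: produce a locally quasi-finite, universally submersive morphism $\cW\to\cX$ from a stack admitting a good moduli space and invoke Lemma~\ref{lemma-f-constructible}; and you correctly identify the Luna-type slice statement as the crux. But you leave that crux unresolved, and this is a genuine gap rather than a deferrable detail. The slice theorem you invoke (an \'etale or quasi-finite $[\Spec B/G_x]\to\cX$ near each closed point with linearly reductive stabilizer) is only available, in the form the paper relies on (\cite[Theorem 3]{alper_quotient}), when $X$ is smooth; the lemma as stated allows arbitrary finite-type schemes $X$, possibly singular and non-normal. Your proposed fallback --- stratifying by stabilizer type and ``patching'' --- does not work as described: $F_{\cX}(\cZ)$ is defined via closures taken in $|\cX_s|$, so its intersection with a locally closed stratum $\cU$ is \emph{not} $F_{\cU}(\cZ\cap\cU)$ (a point of $\cU$ can specialize into $\cZ$ only outside $\cU$); the paper's own example $\cX=[\PP^1/\GG_m]$, where $F_{\cX}(\{0\})$ is not closed, illustrates exactly this failure of localization to open or locally closed pieces.

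The paper closes the gap with two reductions that are absent from your argument, and this is precisely where the hypotheses (i) and (ii) are spent. First, the general case reduces to the normal case via the normalization, which is finite, hence quasi-finite and universally submersive; the image argument in the proof of Lemma~\ref{lemma-f-constructible} only needs $F_{\cW}(f^{-1}(\cZ))$ to be constructible (not closed) for its image under a finite-type morphism to be constructible, so constructibility descends from $[\tilde X/G]$ to $[X/G]$. Second, the normal case reduces to the smooth case by Sumihiro's theorem \cite{sumihiro1} (equivariant quasi-projective embedding for actions of \emph{connected} groups --- this is where connectedness of $G$ and the quotient-stack presentation are genuinely used, as the paper notes in the introduction), after which the slice theorem of \cite{alper_quotient} applies and your argument goes through. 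Without these two reductions, your proof only establishes the lemma for smooth (or at best normal quasi-projective) $X$.
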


\begin{proof}  If $X$ is smooth, the statement follows from \cite[Theorem 3]{alper_quotient}) and Lemma \ref{lemma-f-constructible}.  We may apply Sumihiro's theorem (\cite{sumihiro1}) to reduce the normal case to the smooth case.  The general case follows since normalization is finite and, in particular, quasi-finite and universally submersive.
\end{proof}


\begin{remark}  \label{remark-constructibility}
It appears that in the proof of \cite[Lemma 2]{bbs_three_theorems}, the constructibility of $\{x \in X \mid Gx \cap Y \neq \emptyset \}$ is not verified for the action of the reductive group $G$ on the algebraic variety $X$.  It is checked the set is closed under specialization, but one needs constructibility of the set to then conclude it is closed.
\end{remark}

\begin{lem} (cf. \cite[Lem. 2b]{bbs_three_theorems}) \label{lemma-f-closed-criterion}
Suppose $\cX$ satisfies the hypotheses of Lemma \ref{lemma-constructibility}.
Suppose further that either:
\begin{enumeratei}
\item for every smooth curve $C$ over $k$ and morphism $f: C \to \cX$, the scheme-theoretic image of $f$ admits a good moduli space; or
\item for every pair of points $x,y\in |\cX|$, there exists an open substack $\cU_{xy} \subseteq \cX$ that contains $x$ and $y$ and admits a good moduli space.
\end{enumeratei}
Then for every closed substack $\cZ \subseteq \cX$, the set $F_{\cX}(\cZ) \subseteq \cX$ is closed.
\end{lem}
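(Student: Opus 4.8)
The plan is to combine the constructibility already available with a ``transfer'' step that moves the question onto a closed substack carrying a good moduli space. By Lemma \ref{lemma-constructibility} the set $F:=F_{\cX}(\cZ)$ is constructible, and since $|\cX|$ is a noetherian (indeed Jacobson) topological space it suffices to prove that $F$ is stable under specialization; equivalently, that every closed point of $\overline F$ lies in $F$, which (writing $F$ as a finite union of irreducible locally closed pieces) amounts to: if $x\in F$ and $x'\in\overline{\{x\}}$, then $x'\in F$. The key point is a transfer principle: if $\cY\subseteq\cX$ is a \emph{closed} substack admitting a good moduli space, with $x,x'\in|\cY|$ and $x\in F_{\cY}(\cZ\cap\cY)$, then $x'\in F$. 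Indeed, Lemma \ref{lemma-f-closed} makes $F_{\cY}(\cZ\cap\cY)$ closed in $|\cY|$, so it contains $\overline{\{x\}}^{\cY}=\overline{\{x\}}^{\cX}\cap|\cY|\ni x'$; and since $\cY\hookrightarrow\cX$ is a closed immersion one reads off from the definition of $F$ that $F_{\cY}(\cZ\cap\cY)\subseteq F_{\cX}(\cZ)$.

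Next I would reduce to the case that $\cX$ is integral with $x$ its generic point. If $\overline{\{x\}}_{\mathrm{red}}\subsetneq\cX$, then $\overline{\{x\}}_{\mathrm{red}}$ again satisfies the hypotheses of Lemma \ref{lemma-constructibility} (a closed substack of $[X/G]$ is again a quotient of a scheme by $G$, and stabilizers of its closed points are stabilizers of closed points of $\cX$) as well as hypothesis (i) or (ii) (these pass to closed substacks, using that a closed substack of a stack admitting a good moduli space again admits one, by Proposition \ref{properties-good}); so by induction on, say, $(\dim\cX,\#\{\text{components of }\cX\})$ the set $F_{\overline{\{x\}}_{\mathrm{red}}}(\cZ\cap\overline{\{x\}}_{\mathrm{red}})$ is closed, it contains $x$ because $x\in F_{\cX}(\cZ)\iff x\in F_{\overline{\{x\}}_{\mathrm{red}}}(\cZ\cap\overline{\{x\}}_{\mathrm{red}})$, hence it contains $\overline{\{x\}}\ni x'$, and therefore $x'\in F_{\overline{\{x\}}_{\mathrm{red}}}(\cZ\cap\overline{\{x\}}_{\mathrm{red}})\subseteq F_{\cX}(\cZ)$. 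So assume $\cX$ integral with generic point $x$, and let $x'\in|\cX|$ be arbitrary.

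Now the hypotheses enter. Under (i): using the presentation $\cX=[X/G]$ and Sumihiro's theorem to arrange $X$ quasi-projective with linearized $G$-action, choose a geometric point $\bar x$ over $x$ with $\overline{G\cdot\bar x}\cap Z\neq\emptyset$ and a point $\bar z$ in that intersection, and build a smooth curve $C$ with a morphism $f\colon C\to\cX$ whose image contains $x'$ and which drags $\bar x$ toward $\bar z$; its scheme-theoretic image $\cY$ admits a good moduli space by hypothesis (i), and one checks $x\in F_{\cY}(\cZ\cap\cY)$, so the transfer principle applies. Under (ii) I would deduce (i): if $f\colon C\to\cX$ is a morphism from a smooth curve, its scheme-theoretic image $\cY$ is irreducible of dimension $\le 1$; cover $\cY$ by finitely many open substacks of the form $\cY\cap\cU_{pq}$, each a closed substack of $\cU_{pq}$ and hence admitting a good moduli space, and glue these with Lemma \ref{lemma-glue}. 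The saturation hypothesis of Lemma \ref{lemma-glue} is automatic here: by Lemma \ref{lemma-saturated} one must check that the base-changed closure of each non-closed point of an overlap lies in that overlap, and on a one-dimensional stack the base-changed closure of a non-closed point is a single point (a $k(x)$-rational point of a one-dimensional variety over $k(x)$ is closed), so the condition is trivial.

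The main obstacle is the curve construction in case (i): producing $\cY$ with $x\in F_{\cY}(\cZ\cap\cY)$ is delicate because $F$ is defined via base change to residue fields, so ``$\overline{G\cdot\bar x}$ meets $Z$'' must be propagated to ``$\overline{\{x\}}^{\cY_s}$ meets $(\cZ\cap\cY)_s$''; and when $\overline{\{x\}}$ is positive-dimensional no curve passes through its generic point $x$, so one must instead work with a suitable family of curves, or cut $X$ by hyperplane sections (after Sumihiro) to descend dimension by dimension. In case (ii) the corresponding delicate point is the verification of the saturation hypothesis of Lemma \ref{lemma-glue}, where the one-dimensionality of the scheme-theoretic image of a curve is used in an essential way.
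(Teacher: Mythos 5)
Your opening move (constructibility via Lemma \ref{lemma-constructibility}, then reduction to stability under specialization) matches the paper, and your ``transfer principle'' is exactly the paper's use of Lemma \ref{lemma-f-closed} on a closed substack (for $\cY\subseteq\cX$ closed one has $F_{\cX}(\cZ)\cap|\cY|=F_{\cY}(\cZ\cap\cY)$). But both of your case analyses have genuine gaps. In case (i) you want a curve whose scheme-theoretic image contains the specific point $x\in F:=F_{\cX}(\cZ)$ together with its specialization $x'$, and you yourself observe that no curve passes through $x$ when $\overline{\{x\}}$ has dimension $\ge 2$; the proposed escape routes (families of curves, hyperplane sections) are not carried out, so this case is not proved. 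The fix is to let the curve witness the failure of closedness rather than pass through $x$: if the constructible set $F$ is not closed, the standard curve criterion for finite-type stacks over a field produces a smooth pointed curve $(C,p)$ and $f\colon C\to\cX$ with $f(C\setminus p)\subseteq F$ but $f(p)\notin F$. Hypothesis (i) applies to this $f$; its scheme-theoretic image $\cY$ is closed and admits a good moduli space, so $F\cap|\cY|=F_{\cY}(\cZ\cap\cY)$ is closed by Lemma \ref{lemma-f-closed}, while $f(p)\in\overline{f(C\setminus p)}\subseteq \overline{F\cap|\cY|}$, a contradiction. This is the paper's argument, and it requires no control over where the curve sits relative to $x$.

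In case (ii) you propose to deduce (i) by covering the image $\cY$ of a curve by the opens $\cY\cap\cU_{pq}$ and gluing via Lemma \ref{lemma-glue}, asserting that the saturation hypothesis is automatic because $\cY$ is one-dimensional. That assertion is false: $|\cY|$ can contain specialization chains of length greater than one even when $\cY$ is the scheme-theoretic image of a curve. For instance, let $\GG_m^2$ act coordinatewise on $\mathbb{A}^2$ and take the curve $t\mapsto(t,t)$; its scheme-theoretic image in $[\mathbb{A}^2/\GG_m^2]$ is the whole stack, whose space of points is the chain (open orbit, two axis orbits, origin), and the two opens obtained by deleting one axis each admit good moduli spaces while their overlap (the open orbit point) is saturated in neither. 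Verifying saturation of overlaps is precisely the hard content, not a triviality. The paper instead handles (ii) directly, without gluing: if $x\in\overline{F}\setminus F$ is a specialization of $x'\in F$, choose by quasi-compactness finitely many $\cU_{xy_i}$ covering $\cX$ and all containing $x$; each $F_{\cU_{xy_i}}(\cZ\cap\cU_{xy_i})$ is closed in $|\cU_{xy_i}|$ by Lemma \ref{lemma-f-closed}, and $F=\bigcup_i F_{\cU_{xy_i}}(\cZ\cap\cU_{xy_i})$ (if $\overline{\{u\}}$ meets $\cZ_s$ at a point of $\cU_{xy_i}$, then $u\in\cU_{xy_i}$ as well, opens being stable under generization). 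Then $x'$ lies in some $F_{\cU_{xy_i}}(\cZ\cap\cU_{xy_i})$, whose closedness in $\cU_{xy_i}\ni x$ forces $x\in F$, a contradiction. Your reduction to integral $\cX$ and the induction are harmless but unnecessary once these two steps are done correctly.
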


\begin{proof}
By Lemma \ref{lemma-f-constructible}, the sets $F_{\cX}(\cZ)$ are constructible, so it suffices to check that $F_{\cX}(\cZ)$ is closed under specialization.
First, suppose condition (i) holds. If $F_{\cX}(\cZ)$ is not closed for some closed substack $\cZ\subseteq \cX$, then there exists a smooth pointed curve $(C,p)$ and a morphism $f: C \to \cX$ such that $f(C \setminus p) \subseteq F_{\cX}(\cZ)$ but $f(p) \notin F_{\cX}(\cZ)$.  By assumption, the scheme-theoretic image $\cY \subseteq \cX$ of $f$ admits a good moduli space.  But then Lemma \ref{lemma-f-closed} implies $F_{\cX}(\cZ) \cap \cY = F_{\cY} (\cZ \cap \cY)$ is closed, a contradiction.

Now suppose condition (ii) holds.  If $F_{\cX}(\cZ)$ is not closed for some closed substack $\cZ\subseteq \cX$, then there exists a closed point $x \in \overline{F_{\cX}(\cZ)} \setminus F_{\cX}(\cZ)$ that is a specialization of $x' \in F_{\cX}(\cZ)$.  By assumption, there are finitely many points $y_1, \ldots, y_k \in |\cX|$ and open substacks $\cU_{xy_i}$ containing $x$ and $y_i$, such that $\cU_{xy_i}$ admits a good moduli space and $\bigcup_i \cU_{xy_i} = \cX$.  Note that $ F_{\cU_{xy_i}}(\cX \cap \cU_{xy_i}) \subseteq |\cU_{xy_i}|$ is closed and $F_{\cX}(\cZ) = \bigcup_i F_{\cU_{xy_i}}(\cX \cap \cU_{xy_i})$.  But $x' \in F_{\cU_{xy_i}}(\cZ \cap \cU_{xy_i})$ for some $i$, which contradicts $x \notin F_{\cX}(\cZ)$.
\end{proof}

\begin{remark} In (ii) above, if one were instead to require the weaker property that every point have a open neighborhood admitting a good moduli space, then the conclusion would no longer hold.   Consider, for example, the stack $\cX = [\PP^1 / \GG_m]$, where $\GG_m$ acts by multiplication. In this case, $F_{\cX} (\{ 0 \}) = [ (\PP^1 \setminus \{ \infty\}) / \GG_m]$ is not closed.
\end{remark}

\begin{prop}  \label{prop-good}
Suppose $\cX$ is a noetherian algebraic stack such that:
\begin{enumeratei}
\item every point $x \in |\cX|$ has an open neighborhood admitting a good moduli space; and
\item for every closed substack $\cZ \subseteq \cX$, $F_{\cX}(\cZ) \subseteq |\cX|$ is closed.
\end{enumeratei}
Then $\cX$ admits a good moduli space.
\end{prop}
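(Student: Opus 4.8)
The plan is to extract from hypothesis~(i) a \emph{finite} open cover of $\cX$ by substacks that are \emph{saturated} and carry good moduli spaces, and then to glue the resulting good moduli spaces by invoking Lemma~\ref{lemma-glue}. Since $\cX$ is noetherian it is quasi-compact, so hypothesis~(i) lets us write $\cX = \cU_1 \cup \cdots \cup \cU_n$ with each $\cU_i$ admitting a good moduli space $\pi_i\colon \cU_i \to Y_i$, $Y_i$ an algebraic space.

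For each $i$, apply Lemma~\ref{lemma-saturated-opens}---whose hypothesis is exactly condition~(ii)---to $\cU_i \subseteq \cX$. This produces an open substack $\cV_i \subseteq \cU_i$ which is saturated in $\cX$ and, by part~(ii) of that lemma, contains every point of $\cU_i$ closed in its fibre over $S$. Being saturated in $\cX$, each $\cV_i$ is a fortiori saturated inside $\cU_i$ (its fibrewise closures already lie in $\cV_i$), hence saturated for $\pi_i$ by Lemma~\ref{lemma-saturated}; consequently $\pi_i|_{\cV_i}\colon \cV_i \to \pi_i(\cV_i)$ is again a good moduli space, onto the open subspace $\pi_i(\cV_i) \subseteq Y_i$, which is again an algebraic space.

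I claim $\cX = \bigcup_i \cV_i$. This union is open, and it contains every point $x \in |\cX|$ closed in its fibre over $S$: such $x$ lies in some $\cU_i$, and the ``in particular'' clause of Lemma~\ref{lemma-saturated-opens}(ii) then gives $x \in \cV_i$. It therefore suffices to show that any open substack $\cW \subseteq \cX$ containing every point closed in its fibre over $S$ equals $\cX$. If not, $\cZ := \cX \setminus \cW$ is a nonempty closed substack; since $\cX$, and hence $\cZ$, is noetherian, the collection of closed subsets of $|\cZ|$ of the form $\overline{\{z\}}$ has a minimal member, and any $z$ attaining it is closed in $\cZ$, hence closed in $\cX$, and hence closed in its fibre $\cX_s$ over $s\colon \Spec k(z)\to S$. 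But then $z \in \cW$, contradicting $z \in \cZ$. This is the step I expect to require the most thought: it is where hypothesis~(ii) is genuinely used---through Lemma~\ref{lemma-saturated-opens}, which forces the $\cV_i$ to be large enough that covering the fibrewise-closed points of $\cX$ already covers all of $\cX$.

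Finally, apply Lemma~\ref{lemma-glue} to the cover $\cX = \bigcup_i \cV_i$. Its condition~(i) requires $\cV_i \cap \cV_j$ to be saturated for $\pi_i$ for all $i,j$, which by Lemma~\ref{lemma-saturated} amounts to checking that for every $u \in |\cV_i \cap \cV_j|$ over $s\colon\Spec k(u)\to S$ the closure $\overline{\{u\}}\subseteq|(\cV_i)_s|$ is contained in $|(\cV_i\cap\cV_j)_s|$. But $\cV_j$ is saturated in $\cX$, so $\overline{\{u\}}\subseteq|\cX_s|$ already lies in $|(\cV_j)_s|$; intersecting with $|(\cV_i)_s|$ gives the assertion. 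Lemma~\ref{lemma-glue} then yields a good moduli space $\pi\colon \bigcup_i \cV_i \to Y$ with $Y$ an algebraic space, and since $\bigcup_i \cV_i = \cX$ this is a good moduli space for $\cX$.
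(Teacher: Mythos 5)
Your proof is correct and follows essentially the same route as the paper's: shrink the given neighborhoods via Lemma~\ref{lemma-saturated-opens} to saturated open substacks, observe these still cover $\cX$ because they capture all (fibrewise) closed points, and glue via Lemma~\ref{lemma-glue}. You simply make explicit several steps the paper leaves implicit (the finite subcover, the covering argument through closed points, and the verification of the saturation hypothesis of Lemma~\ref{lemma-glue}), which is welcome detail rather than a deviation.
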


\begin{proof}  For a closed point $x \in |\cX|$, let $\cU_x$ be an open neighborhood admitting a good moduli space.  By Lemma \ref{lemma-saturated-opens}, there exists an open neighborhood $\cV_x \subseteq \cU_x$ containing $x$ such that $\cV_x \subseteq \cX$ is saturated.  It follows also that $\cV_x \subseteq \cU_x$ is saturated, and so $\cV_x$ also admits a good moduli space.  By Lemma \ref{lemma-glue}, the good moduli spaces of $\cV_x$ can be glued to construct a good moduli space of $\cX$.
\end{proof}

The proof of the following theorem now follows directly from Lemma \ref{lemma-f-closed-criterion} and Proposition \ref{prop-good}.

\noindent {\bf Theorem 1.4.} {\em
Let $G$ be a connected algebraic group acting on a scheme $X$, and suppose that for every pair of points $x,y\in X$, there exists a $G$-invariant open subscheme  $U_{xy} \subseteq X$ that contains $x$ and $y$ and admits a good quotient.  Then $X$ admits a good quotient.}
\epf
}

\bibliography{references}{}
\bibliographystyle{amsalpha}

\end{document}